\newtheorem{theorem}{Theorem}[section]
\newtheorem{lemma}[theorem]{Lemma}
\newtheorem{proposition}[theorem]{Proposition}
\numberwithin{equation}{section}
\theoremstyle{remark}
\newtheorem*{remark}{Remark}
\def\supp{\mathop{\mathrm{supp}}\nolimits}
\def\supp{\mathop{\rm supp}\nolimits}
\def\d{\mathrm{d}}
\def\<{\langle}
\def\>{\rangle}
\def\a{\alpha}
\def\e{\epsilon}
\def\g{\gamma}
\def\k{\kappa}
\def\s{\sigma}
\def\t{\tau}
\def\D{\Delta}
\def\del{\partial}
\def\R{{\Bbb R}}  
\def\N{{\Bbb N}}  
\def\P{{\Bbb P}}  
\def\Z{{\Bbb Z}}
\def\E{{\Bbb E}}
\let\cal=\mathcal
\def\AA{{\cal A}}
\def\EE{{\cal E}}
\def\FF{{\cal F}}
\def\PP{{\cal P}}
 \def \k {{\kappa}}
 \def \s {{\sigma}}
 \def \D {{\Delta}}
 \def \t {{\tau}}
 \def \g {{\gamma}}
 \def \d {{\delta}}
 \def \a {{\alpha}}
 \def \del {{\partial}}
 \def \ba {\begin{array}}
 \def \ea {\end{array}}
 \newcommand{\be}{\begin{equation}}
 \newcommand{\ee}{\end{equation}}
\newcommand{\bea}{\begin{eqnarray}}
 \newcommand{\eea}{\end{eqnarray}}
\def\TH(#1){\label{#1}}\def\thv(#1){\ref{#1}}
\def\Eq(#1){\label{#1}}\def\eqv(#1){(\ref{#1})}
 \def \1{\mathbbm{1}}
\def\wt {\widetilde}
\def\wh{\widehat}
\def\eee{\mathrm{e}}
\begin{document}

\begin{frontmatter}

\title{ Extended Convergence of the Extremal Process of Branching Brownian Motion}
\runtitle{Extended Convergence of the Extremal Process of BBM}


\begin{aug}
\author{\fnms{Anton} \snm{Bovier}\thanksref{t1,m1}\corref{Anton Bovier}\ead[label=e1]{bovier@uni-bonn.de}}
\and
\author{\fnms{Lisa} \snm{Hartung}\thanksref{t2,m1,m2}\ead[label=e2]{lisa.hartung@nyu.edu}}

\thankstext{t1}{Partially supported through the German Research Foundation in 
the Collaborative Research Center 1060    \emph{The Mathematics of Emergent Effects}, 
the Priority Programme  1590 \emph{Probabilistic Structures in Evolution},
the \emph{Hausdorff Center for Mathematics} (HCM), and  the Cluster of Excellence \emph{ImmunoSensation} at Bonn University.}
\thankstext{t2}{Supported by the German Research Foundation in the \emph{Bonn International Graduate School in Mathematics} (BIGS) and the
Collaborative Research Center 1060 \emph{The Mathematics of Emergent Effects}.}

\address{Institut f\"ur Angewandte Mathematik\\
Rheinische Friedrich-Wilhelms-Universit\"at\\ Endenicher Allee 60\\ 53115 Bonn, Germany\\
  \printead{e1}}
   \affiliation{Bonn University\thanksmark{m1} and  New York University\thanksmark{m2}}

\address{L. Hartung\\Institut f\"ur Angewandte Mathematik\\
Rheinische Friedrich-Wilhelms-Universit\"at\\ Endenicher Allee 60\\ 53115 Bonn, Germany\\
Present address: Department of Mathematics\\ Courant Institute of Mathematical Sciences \\New York University, 251 Mercer St.\\
New York, NY 10012-1110, USA\\
   \printead{e2}}
\end{aug}

\runauthor{A. Bovier and L. Hartung}

\begin{abstract}
 We extend the results of Arguin et al \cite{ABK_E} and A\"\i{}d\'ekon et al \cite{ABBS} on the 
 convergence of the extremal process of branching Brownian motion by adding an extra dimension 
 that encodes the "location" of the particle in the underlying Galton-Watson tree. We show that the
 limit is a cluster point process on $\R_+\times \R$ where each cluster is the atom of a 
 Poisson point process on $\R_+\times \R$ with a random intensity measure $Z(dz) \times 
 C\eee^{-\sqrt 2x}dx$, where the random measure is explicitly constructed from  the derivative 
 martingale. This work is motivated by an analogous result for the Gaussian free field 
 by Biskup and Louidor \cite{biskuplouidor16}.
\end{abstract}

\begin{keyword}[class=MSC]
\kwd[Primary ]{60J80}
\kwd{60G70}
\kwd[; secondary ]{ 82B44}
\end{keyword}

\begin{keyword}
\kwd{Gaussian processes, branching Brownian motion}
\kwd{extremal processes,  cluster processes, multiplicative chaos}
\end{keyword}

\end{frontmatter}


\section{Introduction} 

Over the last years, the analysis of the extremal process of so-called 
\emph{log-correlated}
processes has been studied intensively. One prime example was the construction of the 
extremal process of branching Brownian motion \cite{ABK_E,ABBS} and branching 
random walks
\cite{Madaule11}. For recent reviews see, e.g. \cite{bovier16,shi16}. 
The processes appearing here, Poisson point processes with random 
intensity (Cox processes, see \cite{Cox55})
decorated by a cluster process representing clusters of particles that have rather recent 
common ancestors, are widely believed to be
universal for a wide class of log-correlated processes. In particular, it is expected for the 
discrete Gaussian 
free field, and  results in this direction have been proven by  Bramson, Ding, and 
Zeitouni
 \cite{BraDiZei} and Biskup and Louidor \cite{BisLou13,biskuplouidor16}. These results 
 describe the statistics of the positions  ($=$ values)  of the extremal points of these processes. In 
 extreme value 
 theory (see e.g. \cite{LLR}) it is customary to give an even more complete description of 
 extremal 
 processes that also encode the \emph{locations of the extreme points} (``complete 
 Poisson convergence"). In the case of the two-dimensional 
 Gaussian free field, Biskup and Louidor \cite{BisLou13} 
 conjectured and recently proved \cite{biskuplouidor16}  the following  result.
 For $(i,j)\in (1,\dots, n)^2$, let $X^n$  be the centred Gaussian process indexed by
$ (1,\dots, n)^2$ with covariance\footnote{We change the normalisation of the variance so 
that the results compare better to BBM.}
\be\Eq(gff.1)
\E \left(X_{(i,j)}^nX_{(k,l)}^n\right) = \pi G^n((i,j),(k,l)),
\ee
where $G^n$ is the Green function of simple random walk on $(1,\dots, n)^2$, killed upon exiting
this domain. It is now proven that,
with $m_n(u)\equiv \sqrt {2} \ln n^2-\frac 3{2\sqrt 2} \ln\ln  n^2$,
the family of point  processes on $\R$
\be\Eq(gff.2)
\sum_{1\leq i,j\leq n} \d_{X_{(i,j)}-m_n}
\ee
converges to a process of the form 
\be\Eq(gff.3)
\sum_{i\in \N}\sum_{j\in \N} \d_{p_i+\D^{(i)}_j},
\ee
where the $p_i$ are the atoms of a Poisson point process with random intensity measure
$Z\eee^{-\sqrt {2}u}du$,
for a random variable $Z$, and $\D_j^{(i)}$ are the atoms of iid copies $\D^{(i)}$ of a certain 
point process $\D$ on $[0,-\infty)$.
The extended version of this result reads as follows.
Define the point processes,
\be\Eq(gff.4)
\PP_n\equiv \sum_{1\leq i,j\leq n}  \d_{(i/n,j/n),X_{(i,j)}-m_n},
\ee
on $(0,1]^2\times \R$. Then, $\PP_n$ converges to a point process $\PP$ on  $(0,1]^2\times \R$
of the form 
\be\Eq(gff.5)
\sum_{i\in \N}\sum_{j\in \N} \d_{x_i,p_i+\D^{(i)}_j},
\ee
where $(x_i,p_i)$ are the atoms of a Poisson point process on  $(0,1]^2\times \R$
with random intensity measure $Z(dx)\times \eee^{-\sqrt {2}u}du$, 
where $Z(dx) $ is some random measure on $(0,1]^2$.  
 Biskup and Louidor first  {proved}  in  \cite{BisLou13}  a slightly weaker result for the point process 
of \emph{local extremes:} Let $r_n$ be a sequence such that $r_n\uparrow \infty$ and $r_n/n
\downarrow 0$, and define 
\be\Eq(gff.6)
\eta_n\equiv \sum_{1\leq i,j\leq n}  \d_{\left((i/n,j/n),X_{(i,j)}-m_n\right)}
\1_{\left\{X_{(i,j)}= \max \left(X_{(k,\ell)}:  |k-i|<r_n, |\ell-j|<r_n \right)\right\}}.
\ee
Then $\eta_n$ converges to the  Poisson point process on  $(0,1]^2\times \R$
with random intensity measure $Z(dx)\times \eee^{-\sqrt {2}u}du$,

The purpose of this article is to prove the analog of the full result  for branching Brownian 
motion.  To do so, we need to decide on what should replace the square $(0,1]^2$ in 
that case. 
Before we do this, let us briefly recall the construction of branching Brownian motion. We 
start with 
a continuous time Galton-Watson process \cite{AN} with branching mechanism $p_k, k\geq 1$, normalised such that $\sum_{i=1}^\infty p_k=1$,
$\sum_{k=1}^\infty k p_k=2$ and $K=\sum_{k=1}^\infty k(k-1)p_k<\infty$. At any time $t$
 we may label the endpoints of the process $i_1(t),\dots, i_{n(t)}(t)$, where $n(t)$ is the number of 
 branches at time $t$. Note that, with this choice of normalisation, we have that 
 $\E  n(t)=\eee^t$. 
 Branching Brownian motion is then constructed by starting a Brownian motion at the 
 origin at time 
 zero, running it until the first time the GW process branches, and then starting independent Brownian motions for
 each branch of the GW process starting at the position of the original BM at the branching time. Each of these runs again 
 until the next branching time of the GW occurs, and so on.

 We denote the positions of the $n(t)$ particles at time $t$ by $x_1(t),\dots, x_{n(t)}(t)$. 
 Note that, of course, the positions of these particles do not reflect the position of the 
 particles ``in the tree". 

We now want to embed the leaves of a Galton-Watson process 
 into some finite dimensional space (we  choose $\R_+$) in a 
consistent way that respects 
the
natural tree distance. Since we already know from \cite{ABK_G} that the 
(normalised) genealogical distance of
extreme particles is asymptotically either zero or one, one should expect that the 
resulting process should again be Poisson in this space. In the case of deterministic 
binary  branching 
at integer times,  the leaves of the tree at time $n$ are naturally   labelled by sequences
 $\s^n\equiv (\s_1\s_2\dots\s_n)$, with $\s_\ell\in \{0,1\}$. These sequences can  be 
 naturally mapped 
 into $[0,1]$ via 
 \be\Eq(gff.6.1)
 \s^n\mapsto \sum_{\ell=1}^n \s_\ell 2^{-\ell-1}\in [0,1].
 \ee
Moreover, the limit, as $n\uparrow\infty$ of the image of this map is $[0,1]$.
 In the next section we construct an analogous map for the Galton-Watson process.

The remainder of this paper is organised as follows. In Section 2 we construct an embedding of the Galton-Watson tree 
into $\R_+$ that allows to locate particles "in the tree". In Section 3 we state our main results on the convergence of 
the two-dimensional extremal process of BBM. In Section 4 we analyse the geometric properties of the embedding constructed in Section 2. In Section 5 we recall the $q$-thinning from Arguin et al. \cite{ABK_P}. In Section 6 we give the 
proofs of the main convergence results announced in Section 3.

\noindent\textbf{Acknowledgements.} We thank an anonymous referee for a very careful reading of our paper and for numerous valuable suggestions.

\section{The embedding}

Our goal is to define a map $\g:\{1,\dots,n(t)\}\to \R_+$ in such a way that it encodes the 
genealogical structure of the underlying 
supercritical Galton-Watson process. 

 Let us define the set of (infinite) multi-indices
\be\Eq(multi.1)
\mathbf{I}\equiv \Z_+^\N,
\ee
and let $\mathbf{F}\subset \mathbf{I}$ denote the subset of multi-indices that contain 
only a finitely many entries that are different from zero. Ignoring leading zeros, we see
that 
\be\Eq(multi.2)
\mathbf{F} = \cup_{k=0}^\infty \Z_+^k,
\ee
where $\Z_+^0$ is either the empty multi-index or the multi-index containing only 
zeros.

A continuous-time Galton-Watson process will be encoded by the set of branching times,
$\{t_1<t_2<\dots< t_{W(t)}<\dots\}$  (where $W(t)$ denotes the number of branching 
times up to time $t$) and by a consistently assigned set of multi-indices
for all times $t\geq 0$. 
To do so, we construct, for a given tree, 
the sets of multi-indices, $\t(t)$ at time $t$ as follows.


\begin{figure}[htbp]
 \includegraphics[width=15cm]{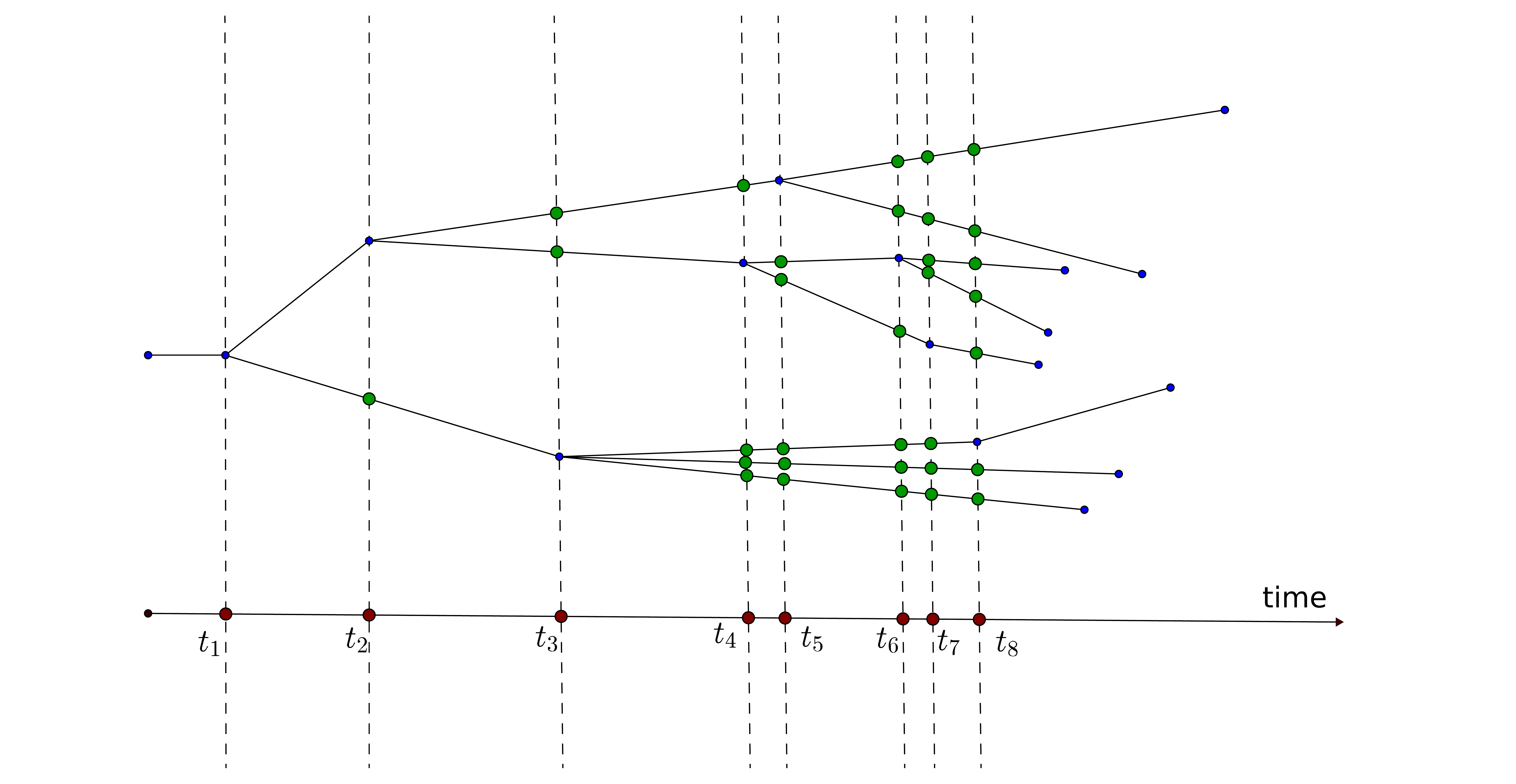}
 \caption{Construction of $\wt T$: The green nodes were introduced into the tree `by hand'.}\label{figure.1}
\end{figure}

\begin{itemize}
 \item $\{(0,0,\dots)\} =\{u(0)\}=\t(0)$.
 \item  for all $j\geq 0$,  for all  $t\in [t_j,t_{j+1})$,   $\t(t)=\t(t_j)$.
 \item If $u\in \t(t_j)$ then $u+(\underbrace{0,\dots,0}_{W(t_j) \times 0}, k,0,\dots)\in 
 \t(t_{j+1})$ if $0\leq k\leq l^u(t_{j+1})-1$, where
 \be\Eq(map.1)
l^u(t_j)=\# \{\mbox{ offsprings of  the particle corresponding to }u\, \mbox{at time}\, t_j \}.
\ee
\end{itemize}
Note that we use the convention that, if a given branch of the tree does not "branch" at time $t_j$, we
add to the underlying 
Galton-Watson  at this time an extra vertex where $l^u(t_j)=1$.
 (see Figure \ref{figure.1}. The new vertices are 
the thick dots).  We call the
resulting tree $\wt T_t$.


We can relate the assignment of labels in a backwards consistent fashion as follows.
For  $u\equiv (u_1,u_2,u_3,\dots)\in \Z_+^\N$, we define the function $u(r), r\in \R_+$,  through
\be\Eq(multi.3)
u_\ell(r)\equiv \begin{cases}  u_\ell,&\,\, \hbox{\rm if}\,\, t_\ell\leq r,\\
0,&\,\, \hbox{\rm if}\,\, t_\ell> r.
\end{cases}
\ee
Clearly,  if $u(t)\in \t(t)$ and  $r\leq t$, then $u(r)\in \t(r)$.  This allows to define the 
\emph{boundary} of the 
tree at infinity  as follows:
\be\Eq(multi.4)
\del \mathbf{T} \equiv \left\{ u\in \mathbf{I}: \forall t<\infty, u(t)\in \t(t)\right\}.
\ee
\index{boundary!of Galton-Watson process}
Note that $\del\mathbf{T}$ is an \emph{ultrametric} space equipped with the 
ultrametric $m(u, v)\equiv \eee^{-d(u,v)}$, where  
$d(u,v)=\sup \{t\geq 0: u(t)=v(t)\}$ is the time of 
 their most recent common ancestor.

%
%
 In this way each 
 leave of the Galton-Watson tree at time $t$,  $i_{k}(t)$ with $k\in\{1,\dots,n(t)\}$ is identified with some multi-label $u^k(t)\in \t(t)$. Then define
\be\Eq(map.3)
\gamma(u(t))\equiv\sum_{j=1}^{W(t)} {u_{j}(t)}\eee^{-t_j}.
\ee
For a given $u$, the function $(\g(u(t)), t\in \R_+)$ describes a trajectory of a particle in $\R_+$. 
The important point is that, for a  fixed particle, this trajectory converges to some point $\g(u)\in 
\R_+$, as $t\uparrow \infty$, almost surely. Hence also the sets $\g(\t(t))$ converge, for any realisation of the tree, to some (random) set $\g(\t(\infty))$.  

\begin{remark}
The labelling of the GW-tree is a slight variant of the familiar Ulam-Neveu-Harris labelling 
(see e.g. \cite{HaHa06}). In our labelling the added zeros keep track of the order in which 
branching occurred in continuous time.
We believe that this or an equivalent construction must be standard, but we have not been able to 
find it for continuous time trees in the literature.
\end{remark}

In addition, in branching Brownian motion, there is also the position of the Brownian motion 
$x_k(t)$ of the $k$-th particle at time $t$. Hoping that there will not be too much confusion, 
we will often write $\g(x_k(t))\equiv \g(u^k(t))$. Thus to any "particle" at time $t$ we can now associate the position on $\R\times\R_+$, $(x_k(t), \g(u^k(t)))$.

\section{The extended convergence result}
In this section we state the analog to \eqv(gff.5) for branching Brownian motion. First let us recall the limit of the extremal process. Bramson \cite{B_M} and 
Lalley and Sellke \cite{LS} show that, with $m(t)=\sqrt 2t -\frac{3}{2\sqrt 2} \ln t$,
\be\Eq(extremal.1.1)
\lim_{t\uparrow\infty}\P\left(\max_{k\leq n(t)}x_k(t)-m(t)\leq x\right)=\omega(x)=\E 
\left[\eee^{-CZ\eee^{-\sqrt 2 x}}\right],
\ee
for some constant $C$, and where $Z\equiv \lim_{t\uparrow\infty} Z_t$ is the limit of the derivative martingale
\be\Eq(Z.0)
Z_t\equiv \sum_{j\leq n(t)}(\sqrt 2 t-x_j(t))\eee^{\sqrt 2(x_j(t)-\sqrt 2 t)}.
\ee
In \cite{ABK_E} and \cite{ABBS} it was shown that the  process,
\be\Eq(extremal.1)
\EE_t\equiv \sum_{k=1}^{n(t)}\d_{x_k(t)-m(t)}
\ee
converges, as $t\uparrow \infty$, in  law to the process
 \be\Eq(extremal.2)
 \EE =\sum_{k,j}\d_{\eta_k+\D^{(k)}_j},
\ee
where $\eta_k$ is the $k$-th atom of a Cox process with random intensity 
measure $CZ \eee^{-\sqrt 2 y}dy$. The $\D^{(k)}_i$ are the atoms of independent and identically distributed  point processes $\D^{(k)}$, which are 
 copies of the limiting process 
\be\Eq(extremal.3)
\D \stackrel{D}{=} \lim_{t\uparrow \infty}
\sum_{i=1}^{n(t)}\d_{\tilde x_i(t)-\max_{j\leq n(t)}\tilde x_j(t)},
\ee 
where $\tilde x(t)$ is a BBM conditioned on $\max_{j\leq n(t)} \tilde x_j(t)\geq \sqrt 2 t$.

Using the embedding $\g$ defined in the previous section, we  now state the following 
 theorem, that exhibits more precisely the nature of the Poisson points and the 
 genealogical structure of the extremal particles.

\begin{theorem}\TH(thm.extend)
The point process $
\wt\EE_t\equiv \sum_{k=1}^{n(t)} \d_{(\g(u^k(t)), x_k(t)-m(t))}\rightarrow \wt\EE$ on $\R_+\times \R$, as
$t\uparrow \infty$,
where 
\be
\wt\EE\equiv \sum_{i,j}\d_{(q_i,p_i)+(0,\D^{(i)}_j)},
\ee
where  $(q_i,p_i)_{i\in \N}$ are the atoms of a Cox process on $\R_+\times \R$ with intensity measure
$Z( dv)\times C\eee^{-\sqrt{2} x}dx$, where $Z(dv)$ is a random measure on $\R_+$, characterised in Proposition 
\thv(lem.Z), and $\D_j^{(i)}$  are the atoms of  independent and identically distributed point processes $\D^{(i)}$ as in \eqv(extremal.2) .
\end{theorem}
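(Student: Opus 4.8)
The plan is to prove the convergence of the Laplace functionals $\E\big[\exp\big(-\langle\wt\EE_t,\phi\rangle\big)\big]$ for every nonnegative continuous $\phi$ on $\R_+\times\R$ with $\supp\phi\subset[0,M]\times[-a,\infty)$; combined with tightness of the laws of $\wt\EE_t$ (the second marginal $\EE_t$ is tight by \cite{ABK_E,ABBS}, and the first marginal lives in a tight random subset of $\R_+$ by the geometry of $\g$ analysed in Section 4), this identifies all subsequential limits and yields the asserted convergence in law. These functionals are computed by the two-scale scheme of \cite{ABK_E} (see also \cite{ABBS}): fix an intermediate time $r$, decompose the tree at time $r$, send $t\to\infty$ with $r$ fixed, and finally send $r\to\infty$. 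The only genuinely new point compared with the one-dimensional statement is that the genealogical coordinate must be carried through all three steps, and the mechanism making this possible is that, along the extremal particles, the embedding $\g$ is asymptotically constant after time $r$, with fluctuations tending to $0$ as $r\to\infty$.

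\emph{Localisation at time $r$.} Write $\wt\EE_t=\sum_{i\le n(r)}\wt\EE_t^{(i)}$, where $\wt\EE_t^{(i)}$ collects the particles at time $t$ descending from the $i$-th particle alive at time $r$, and let $\mathrm{desc}(i)$ denote that set of descendants. By the barrier and truncated-second-moment estimates underlying \cite{ABK_E,ABBS} — in the form adapted to the present set-up, for which the $q$-thinning of \cite{ABK_P} recalled in Section 5 is the main device — only the ``relevant'' ancestors, those in $\mathcal R_r\subset\{1,\dots,n(r)\}$ whose position $x_i(r)$ is not far below $\sqrt 2 r$, carry descendants above level $m(t)-a$; the cardinality of $\mathcal R_r$ is tight, and the remaining summands are negligible as $r\to\infty$, uniformly in $t$. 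For $i\in\mathcal R_r$ and $k\in\mathrm{desc}(i)$, \eqv(map.3) gives
\be\Eq(plan.gcoll)
\g(u^k(t))-\g(u^i(r))=\sum_{j\,:\,r<t_j\le t}u^k_j(t)\,\eee^{-t_j},
\ee
and the quantitative geometry of Section 4, together with a union bound over $\mathcal R_r$ using its moment estimates, produces a deterministic $\e_r\to0$ with $\max_{i\in\mathcal R_r}\max_{k\in\mathrm{desc}(i)}\big|\g(u^k(t))-\g(u^i(r))\big|\le\e_r$ on an event of probability tending to one. Hence, up to an error vanishing as $r\to\infty$,
\be\Eq(plan.loc)
\wt\EE_t\ \approx\ \sum_{i\le n(r)}\d_{\g(u^i(r))}\otimes\EE^{(i)}_t,\qquad \EE^{(i)}_t\df\sum_{k\in\mathrm{desc}(i)}\d_{x_k(t)-m(t)}.
\ee

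\emph{Sending $t\to\infty$ with $r$ fixed.} The subtree rooted at $i$ is a branching Brownian motion of length $t-r$ started from $x_i(r)$; since $\ln(t-r)/\ln t\to1$, recentring it by $m(t)$ rather than by $x_i(r)+m(t-r)$ amounts to translating the limit \eqv(extremal.2) by $x_i(r)-\sqrt 2 r$, which multiplies its Cox intensity by the factor $\eee^{\sqrt 2(x_i(r)-\sqrt 2 r)}$. A superposition of independent decorated Cox processes with a common decoration law is again a decorated Cox process whose intensity measure is the sum of the intensities; superposing over $i\le n(r)$ and using \eqv(plan.loc) we obtain that $\wt\EE_t$ converges, as $t\to\infty$ with $r$ fixed, to a decorated Cox process $\wt\EE^{(r)}$ on $\R_+\times\R$ with the same decoration law as in \eqv(extremal.2) and random intensity measure $\mathcal Z_r(dv)\times C\eee^{-\sqrt 2 x}dx$, where
\be\Eq(plan.Zr)
\mathcal Z_r(dv)\ \df\ \sum_{i\le n(r)}Z^{(i)}\,\eee^{\sqrt 2(x_i(r)-\sqrt 2 r)}\,\d_{\g(u^i(r))}(dv),
\ee
the $(Z^{(i)})_{i\le n(r)}$ being, conditionally on $\FF_r$, independent copies of $Z$. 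The branching decomposition of the derivative martingale \eqv(Z.0) — the additive-martingale remainder term vanishing in the limit — gives $\mathcal Z_r(\R_+)=Z$.

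\emph{Sending $r\to\infty$.} By Proposition \thv(lem.Z) the random measures $\mathcal Z_r$ converge, vaguely and almost surely, to a random measure $Z(dv)$ on $\R_+$ with $Z(\R_+)=Z$, so $\wt\EE^{(r)}$ converges in law, as $r\to\infty$, to the decorated Cox process $\wt\EE$ of the statement. It remains to interchange the two limits, i.e.\ to show that $\lim_{r\to\infty}\limsup_{t\to\infty}$ of the distance between the Laplace functionals of $\wt\EE_t$ and of $\wt\EE^{(r)}$ vanishes; this is the quantitative content of the estimates in \cite{ABK_E,ABBS} — the extremal process is determined by the configuration at time $r$ up to an error tending to $0$ with $r$, uniformly in $t$ — combined with the collapse $\e_r\to0$ of the $\g$-coordinates from the localisation step. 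I expect the principal obstacle to lie exactly here: since $\mathcal R_r$ is only tight (not bounded) while $\E\big[\sum_{k\in\mathrm{desc}(i)}|\g(u^k(t))-\g(u^i(r))|\big]$ is of order $\eee^{-r}$ per unit of derivative-martingale mass, getting a genuinely uniform collapse of the $\g$-locations requires the quantitative geometry of $\g$ developed in Section 4, and it must be dovetailed carefully with the uniform-in-$t$ control imported from the one-dimensional theory of \cite{ABK_E,ABBS}.
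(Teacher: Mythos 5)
Your plan is substantially the same as the paper's: convergence of Laplace functionals via a two-scale scheme — decompose the tree at an intermediate time $r$, let $t\to\infty$, then let $r\to\infty$ — with the key geometric input being that $\g$ is asymptotically constant after time $r$ on the extremal particles. The organisation differs, however. The paper first proves Proposition \thv(prop.pois): convergence of the $q$-thinned process (the process of cluster representatives) to the Cox process with intensity $Z(dv)\times C\eee^{-\sqrt2 x}dx$, and then adds back the decorations using Theorem 2.1 of \cite{ABK_G} (no intermediate genealogical distances among extremal particles) and Theorem 2.3 of \cite{ABBS}. You instead apply the full decorated-Cox-process limit of \cite{ABK_E,ABBS} to each subtree rooted at a time-$r$ ancestor, shift by $x_i(r)-\sqrt2\,r$, superpose over $i\le n(r)$, and then send $r\to\infty$. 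Both routes are valid, and your shift computation (multiplying the Cox intensity by $\eee^{\sqrt2(x_i(r)-\sqrt2 r)}$) is correct. Your $\mathcal Z_r$ coincides with what appears in \eqv(sellke.2), and its convergence to $Z(dv)$ is exactly Proposition \thv(lem.Z); but note that upgrading the pointwise convergence of distribution functions $Z(\cdot,r)\to Z(\cdot)$ to vague convergence of the measures $\mathcal Z_r$ requires the non-atomicity of $Z(dv)$, which the paper obtains from Lemma \thv(lem.prop.3) — a step you should make explicit.

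The ``principal obstacle'' you flag at the end — that the collapse of the $\g$-coordinate may not be uniform over the tight-but-unbounded set $\mathcal R_r$ of relevant ancestors, so a naive union bound over $\mathcal R_r$ is delicate — is in fact resolved by Lemma \thv(lem.prop.1) without ever controlling $\mathcal R_r$ directly. That lemma bounds, via the many-to-one lemma, the probability that \emph{some} particle that is extremal at time $t$ moves its $\g$-coordinate by more than $\eee^{-r/2}$ after time $r$, uniformly in $t>3r$; combined with Lemma \thv(lem.prop.2) (no extremal particle is mapped near the endpoints of the partition rectangles) this permits replacing $\g(x_k(t))$ by $\g(x_k(r))$ inside the Laplace functional at an $O(\e)$ cost, which is precisely what is needed to interchange the $t\to\infty$ and $r\to\infty$ limits. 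So the ingredient you suspected was missing is Lemma \thv(lem.prop.1), applied to extremal particles directly rather than to relevant ancestors.
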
 

\begin{remark}
The nice feature of the process $\wt\EE_t$  is that it allows to 
visualise the different clusters $\D^{(i)}$ corresponding to the different point of the 
Poisson process of cluster extremes. In the process 
$\sum_{k=1}^{n(t)} \d_{  x_k(t)-m(t)}$ considered in earlier work, all these points get superimposed and cannot be disentangled. In other words, the process $\wt\EE$ encodes both the values and the (rough) genealogical structure of the extremes of BBM.
\end{remark}

The measure $Z(dv)$ in an interesting object in itself.
For $v,r\in\R_+$ and $t>r$, we define
\be\Eq(Z.1)
Z(v,r,t)=\sum_{j\leq n(t)}(\sqrt 2 t-x_j(t))\eee^{\sqrt 2(x_j(t)-\sqrt 2 t)}\1_{\g(x_j(r))\leq v},
\ee
which is a truncated version of the usual derivative martingale $Z_t$. In particular, observe that $Z(\infty,r,t)=Z_t$.

\begin{proposition}\TH(lem.Z)
 For each $v\in\R_{+}$ the limit $\lim_{r\uparrow \infty}\lim_{t\uparrow \infty}Z(v,r,t)$  exists almost surely. Set  
 \be\Eq(Z.2)
 Z(v)\equiv\lim_{r\uparrow \infty}\lim_{t\uparrow \infty}Z(v,r,t) .
 \ee
 Then $0\leq Z(v) \leq Z$, where $Z$ is the limit of the derivative martingale. Moreover, $Z(v)$ is monotone increasing in $v$ and the corresponding measure $Z(dv)$ is a.s. non-atomic.
\end{proposition}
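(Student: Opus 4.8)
The plan is to analyze $Z(v,r,t)$ by first taking $t\uparrow\infty$ with $r$ fixed, then $r\uparrow\infty$, exploiting the fact that the indicator $\1_{\{\g(x_j(r))\le v\}}$ is $\FF_r$-measurable, so that conditionally on $\FF_r$ the truncated martingale decomposes along the subtrees rooted at the particles alive at time $r$. First I would fix $r$ and condition on $\FF_r$: writing the particles at time $r$ as $x_1(r),\dots,x_{n(r)}(r)$ with labels $u^i(r)$, each spawns an independent BBM $\{x^{(i)}_\ell(t-r)\}$, and
\be\Eq(Zdecomp)
Z(v,r,t)=\sum_{i\le n(r)}\1_{\{\g(x_i(r))\le v\}}\sum_{\ell}\big(\sqrt2 t-x_i(r)-x^{(i)}_\ell(t-r)\big)\eee^{\sqrt2(x_i(r)+x^{(i)}_\ell(t-r)-\sqrt2 t)}.
\ee
Each inner sum, after extracting the prefactor $\eee^{\sqrt2(x_i(r)-\sqrt2 r)}$ and writing $\sqrt2 t - x_i(r) = (\sqrt2(t-r) - x^{(i)}_\ell(t-r)) + (\sqrt2 r - x_i(r))$, is a linear combination of the derivative martingale and the (critical) additive martingale $W_{t-r}$ of the $i$-th subtree; since $W_{t-r}\to0$ a.s. and the derivative martingale $Z^{(i)}_{t-r}\to Z^{(i)}$ a.s., the limit $t\uparrow\infty$ exists a.s. and equals
\be\Eq(Zrlimit)
Z(v,r)\equiv\lim_{t\uparrow\infty}Z(v,r,t)=\sum_{i\le n(r)}\1_{\{\g(x_i(r))\le v\}}\,\eee^{\sqrt2(x_i(r)-\sqrt2 r)}\,Z^{(i)},
\ee
with the $Z^{(i)}$ i.i.d. copies of $Z$, independent of $\FF_r$. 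In particular $0\le Z(v,r)\le Z_r^{\,\text{full}}$ where the right side is the analogous sum without the indicator, which is precisely (the a.s. limit giving) $Z$; hence $0\le Z(v,r)\le Z$ uniformly in $r$.

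Next I would show that $r\mapsto Z(v,r)$ is a (nonnegative, bounded) martingale with respect to $(\FF_r)_r$, or at least a backward-consistent family, so that the limit $r\uparrow\infty$ exists a.s. by martingale convergence. The martingale property follows from the branching property exactly as for $Z_t$: conditioning \eqv(Zrlimit) at time $r$ on $\FF_s$ for $s<r$, using that $\E[Z^{(i)}\mid\FF_r]$ reproduces the time-$r$ derivative-martingale weights within each subtree and that $\g(x_i(r))\le v$ is determined by the embedding trajectory which, once a particle's $\g$-coordinate at time $s$ already exceeds $v$, stays above $v$ — here one needs the geometric properties of $\g$ from Section 4, in particular that $\g(u(t))$ is monotone along a branch and that the increments after time $s$ are controlled, so the conditional expectation telescopes correctly. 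This gives $Z(v)=\lim_{r\uparrow\infty}Z(v,r)$ a.s., with $0\le Z(v)\le Z$, and monotonicity in $v$ is immediate from \eqv(Zrlimit) since the indicator is monotone in $v$ and the limit preserves pointwise inequalities.

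For non-atomicity of $Z(dv)$ I would argue that a jump of size $\delta>0$ at some point $v_0$ would require a positive fraction $\delta$ of the total derivative-martingale mass to be carried by extremal particles whose embedded location $\g$ is \emph{exactly} $v_0$ — i.e. an atom of the limiting embedded point process $\g(\t(\infty))$ carrying macroscopic $Z$-mass. But from the construction, $\g(u)=\sum_j u_j(t)\eee^{-t_j}$ with the $t_j$ the branching times of an $\E n(t)=\eee^t$ Galton–Watson process, so $\g$ maps the boundary $\del\mathbf T$ into $\R_+$ in a way that, along any infinite line of descent, keeps accumulating independent nontrivial increments; hence for distinct $u\ne u'$ in $\del\mathbf T$ one has $\g(u)\ne\g(u')$ a.s., and more quantitatively $\PP(\g(u(r))=\g(u'(r)))\to0$ and the distribution of $\g(u)$ has no atoms. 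Combined with the fact that the derivative martingale $Z$ is, in the Poissonian picture \eqv(extremal.2), spread over countably many cluster-extreme locations whose $\g$-coordinates are a.s. all distinct (this is where I would invoke the i.i.d. structure of the $Z^{(i)}$ together with the continuity of the law of $\g$), one concludes $\PP(Z(v)-Z(v^-)>0)=0$ for each fixed $v$, and by Fubini $Z(dv)$ is a.s. non-atomic. The main obstacle I anticipate is making this last step rigorous: controlling the joint law of the embedded positions of the extremal particles and ruling out macroscopic concentration of $Z$-mass at a single $\g$-value; this is precisely where the geometric analysis of the embedding in Section 4 must be used in an essential way, rather than the soft martingale arguments that suffice for the existence and monotonicity claims.
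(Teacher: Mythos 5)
Your steps 1 and 2 (the decomposition of $Z(v,r,t)$ along $\FF_r$, the identification of the McKean/additive and derivative martingales of the subtrees, the resulting formula $Z(v,r)=\sum_i\1_{\g(x_i(r))\le v}\eee^{\sqrt2(x_i(r)-\sqrt2 r)}Z^{(i)}$, and the bound $0\le Z(v,r)\le Z$) match the paper's proof.

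Step 3 contains a genuine gap: $r\mapsto Z(v,r)$ is \emph{not} a martingale, and not even a supermartingale in the usual sense, because $Z^{(i)}$ has infinite mean, so $\E[Z(v,r')\mid\FF_r]=\infty$ and conditional-expectation arguments do not apply. What actually works, and what the paper uses, is a purely pathwise observation: since $\g(x_i(\cdot))$ is nondecreasing along each line of descent, the indicator $\1_{\{\g(x_i(r))\le v\}}$ is a.s.\ decreasing in $r$. Going back to the definition $Z(v,r,t)=\sum_{j\le n(t)}(\sqrt2 t-x_j(t))\eee^{\sqrt2(x_j(t)-\sqrt2 t)}\1_{\g(x_j(r))\le v}$ and invoking Lalley--Sellke's fact that $\min_{j\le n(t)}(\sqrt2 t-x_j(t))\to+\infty$ a.s., the negative terms vanish a.s.\ as $t\to\infty$, and the remaining (positive) sum is a.s.\ decreasing in $r$. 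Hence $Z(v,r)$ is a.s.\ decreasing in $r$ and bounded below by $0$, so the limit $Z(v)$ exists without any appeal to martingale convergence. You correctly flagged monotonicity of $\g$ as the relevant ingredient, but deployed it toward the wrong conclusion (a putative martingale property) rather than toward the direct a.s.\ monotonicity in $r$.

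Step 4 is also incomplete, as you yourself anticipate. The heuristic that $\g$ should separate distinct boundary points is correct, but it does not by itself control where the $Z$-mass lands. The paper's argument is different: it invokes the quantitative genealogical separation result of Section 4 (Lemma~4.3), which shows that, uniformly in $t$, with high probability any two particles near the maximum either descend from a recent common ancestor or have $\g$-images separated by some $\d>0$. Rewriting this in terms of the $q$-thinned process and combining it with the convergence of the thinned process to the Cox process with intensity $Z(dv)\times C\eee^{-\sqrt2 x}dx$ (Proposition~6.1), an atom of $Z(dv)$ would force a positive probability that two distinct Poisson points share the same first coordinate up to $\d$ for all $\d$, a contradiction. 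So the missing ingredients are precisely Lemma~4.3 and the thinned-process convergence; the "distinct boundary points have distinct $\g$-images" statement alone is not enough to rule out macroscopic concentration of $Z$-mass.
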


The measure $Z(v)$ is the analogue of the corresponding "derivative martingale 
measure" studied in Duplantier et al \cite{Dup14a,Dup14b} and Biskup and Louidor
\cite{BisLou13, BisLou14} in the context of the 
Gaussian free field and in \cite{BRV,BKNSW} for the critical Mandelbrot multiplicative cascade. For a review, see 
Rhodes and Vargas \cite{RV14}. The objects are examples of what is known as 
\emph{multiplicative chaos} that was introduced by Kahane \cite{Kahane85}.

\section{Properties of the embedding}

We need the  three basic properties of $\g$. Lemma \thv(lem.prop.1) 
states that the map $\g(x_k(t))$ converges for all extremal particles, as 
$t\uparrow \infty$, and is well approximated by the information on the tree up to a fixed time $r$. 
\begin{lemma}\TH(lem.prop.1)
 Let $D\subset \R$ be a compact set. Define, for $0\leq r<t<\infty$,  the events
 \be\Eq(map.4.1)
 \mathcal{A}_{r,t}^{\g}(D)=\left\{\forall k \mbox{ with } x_k(t)-m(t)\in D \mbox{ : }\g(x_k(t))-\g(x_k(r))\leq \eee^{-r/2}\right\}.
 \ee
 For any $\e>0$ there exists $0\leq r(D,\e)<\infty$ such that, for any $r>r(D,\e)$ and
 $t>3r$
 \be\Eq(map.4)
 \P\left(\left(\mathcal{A}_{r,t}^{\g}(D)\right)^c\right)<\e. 
 \ee
\end{lemma}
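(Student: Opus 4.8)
The plan is to control the increment $\g(x_k(t))-\g(x_k(r))$ uniformly over extremal particles by combining two facts: first, a deterministic bound showing that this increment is governed by the "tree mass" created after time $r$ along the ancestral line of particle $k$; and second, the known localization of the genealogy of extremal particles, namely that (on an event of probability arbitrarily close to one) every particle with $x_k(t)-m(t)\in D$ has branched off from the common ancestral structure either before a fixed time $r$ or after time $t-r$, with only finitely many "families'' reaching the top. Concretely, from the definition \eqv(map.3), for any $u$ and any $r\le t$ one has $\g(u(t))-\g(u(r)) = \sum_{j:\,r<t_j\le t} u_j(t)\,\eee^{-t_j}$, so the increment is a sum over branching times after $r$, weighted by $\eee^{-t_j}$ and by the (random) label entries $u_j(t)$, which are bounded by the number of offspring at that branching event. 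The first step is therefore to show that $\sum_{j:\,r<t_j\le t} u_j(t)\,\eee^{-t_j} \le \eee^{-r/2}$ fails only if the ancestral line of particle $k$ experiences an atypically large amount of branching (large offspring numbers, or an atypically large number of branching events $W(t)-W(r)$ relative to what $\eee^{-r}$ can absorb) in the time window $(r,t)$.

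The second step is to quantify this. I would introduce, for fixed $r$, the set of ancestors at time $r$ of the extremal particles, and use the standard first-moment / many-to-one argument for BBM near the front: the expected number of particles at time $t$ with $x_k(t)-m(t)\in D$ is $O(1)$ (this is classical, going back to Bramson, and is implicit in the results of \cite{ABK_E,ABBS} recalled in Section 3). Conditioning on the ancestor at time $r$, an extremal particle's displacement since time $r$ is, to leading order, that of a Brownian bridge-like path that must travel a macroscopic distance $\sqrt2(t-r)$; the key point is that such paths do not see anomalously many branchings, because the total mass of the Galton-Watson tree in $(r,t)$ along any fixed line is, in expectation, linear in $t-r$, while the $\eee^{-t_j}\le \eee^{-r}$ weights decay. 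A clean way to organize this: bound
\be
\g(x_k(t))-\g(x_k(r)) \;\le\; \eee^{-r}\sum_{j:\,r<t_j\le t} u_j(t)\,\eee^{-(t_j-r)} \;=:\; \eee^{-r}\,M_k(r,t),
\ee
and show that $\max_k M_k(r,t)$, the maximum taken over extremal $k$, is $\le \eee^{r/2}$ with probability $\ge 1-\e$ for $r$ large. Since $M_k(r,t)$ is a sum of the form $\sum_j u_j \eee^{-(t_j-r)}$ with the $u_j$ having bounded conditional expectation (using $K=\sum k(k-1)p_k<\infty$, hence $\E[l^u]=2$) and the $t_j-r$ being renewal-type spacings, $\E[M_k(r,t)\mid \text{line}]$ is bounded by a constant uniformly in $t$; then a union bound over the $O(1)$ expected number of extremal particles, via Markov's inequality, gives the $\eee^{r/2}$ threshold easily.

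The main obstacle, and the step requiring genuine care, is making the "union over extremal particles" rigorous without circularity: one cannot condition on the (random) set of extremal particles and then apply independence naively. The clean route is to bound, for every threshold $a>0$,
\be
\P\Big(\exists k:\ x_k(t)-m(t)\in D,\ M_k(r,t)>a\Big)\ \le\ \frac1a\,\E\Big[\sum_{k\le n(t)} \1_{\{x_k(t)-m(t)\in D\}} M_k(r,t)\Big],
\ee
and then evaluate the right-hand side by the many-to-one lemma: it becomes $\frac1a$ times $\eee^t$ times the expectation, under a single Brownian path, of $\1_{\{B_t\in m(t)+D\}}$ multiplied by the expected post-$r$ tree-mass functional along that path. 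The Gaussian factor $\eee^t \P(B_t\in m(t)+D)$ is $O(t^{3/2}\eee^{-\sqrt2\,\text{const}})$... — more precisely it is $O(1)$ after the $-\tfrac{3}{2\sqrt2}\ln t$ correction in $m(t)$, by a routine Gaussian estimate — and the tree-mass functional contributes only a bounded multiplicative constant as argued above, so the whole bound is $\le C/a$, and choosing $a=\eee^{r/2}$ and $r$ large completes the proof. The condition $t>3r$ is used only to ensure the window $(r,t)$ is long enough that the crude bound $t_j-r\ge 0$ and the renewal estimates are comfortably in force; any $t>r$ bounded away from $r$ would do, but $t>3r$ is convenient and matches the later use of the lemma.
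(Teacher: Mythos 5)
Your overall framework — isolate the increment $\g(x_k(t))-\g(x_k(r))$ as a weighted tree-mass sum over the spine after time $r$, bound it via Markov after a union over extremal particles, and evaluate the resulting first moment by the many-to-one lemma — matches the paper's strategy. However, there is a genuine error at the step you flag as the "obstacle needing genuine care."

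You assert that "the expected number of particles at time $t$ with $x_k(t)-m(t)\in D$ is $O(1)$," and correspondingly that $\eee^t\P(B_t\in m(t)+D)$ is $O(1)$. This is false. With $m(t)=\sqrt2 t-\tfrac{3}{2\sqrt2}\ln t$, the Gaussian estimate gives
\be
\eee^t\,\P\big(B_t\in m(t)+D\big)\;\asymp\;\eee^t\cdot\frac{1}{\sqrt{2\pi t}}\,\eee^{-m(t)^2/2t}\,|D|\;\asymp\;\eee^t\cdot\frac{t^{3/2}\eee^{-t}}{\sqrt{t}}\;\asymp\;t.
\ee
The expected number of particles near $m(t)$ is of order $t$, not $O(1)$; it is only the number of particles near the max that is $O(1)$ in probability (a tightness statement, not a first-moment one). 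Consequently your final bound reads $O(t)\cdot K\cdot a^{-1}$, and with $a=\eee^{r/2}$ this is $O(t\,\eee^{-r/2})$, which diverges as $t\uparrow\infty$ at fixed $r$. Your plan as written does not close.

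The paper repairs exactly this gap by inserting the Bramson entropic envelope before applying the many-to-one lemma. By Theorem 2.3 of \cite{ABK_G}, up to a cost $\e/2$ one may restrict to the event that every extremal particle's path stays below $\overline D+E_{t,\a}(s)$ for $s\in[r_1,t-r_1]$. Under the spine change of measure, this barrier condition is independent of the branching-time point process and is controlled by a Brownian bridge estimate (Lemma 3.4 of \cite{ABK_G}), contributing a factor $\kappa\,r_1/(t-2r_1)=O(r_1/t)$. This $1/t$ exactly cancels the extra $t$ in the Gaussian factor, yielding a bound of the form $C\k K\,\frac{tr_1}{t-2r_1}\,\eee^{-r/2}=O(r_1\,\eee^{-r/2})$, which is small uniformly in $t>3r$ for $r$ large. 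You need to add this barrier argument; without it, the first-moment computation is off by a factor of $t$ and the lemma is not proved.
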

\begin{proof} Set $\overline D\equiv \sup\{x\in D\}$ and $\underline D\equiv \inf\{x\in D\}$. 
Let $\e>0$. Then, 
  by Theorem 2.3 of \cite{ABK_G}, for each $\e>0$ there exists  $r_1<\infty$  such that,   for all $t>3 r_1$,
  \bea\Eq(map.5)
  \P\left(\left(\mathcal{A}_{r,t}^{\g}(D)\right)^c\right)
& \leq &\P\big(\exists k: 
  x_k(t)-m(t)\in D,
  \forall_{s\in[r_1,t-r_1]}: x_k(s)\leq \overline D+ E_{t,\a}(s)\nonumber\\
  &&\qquad\mbox{ but }
  \g(x_k(t))-\g(x_k(r))>\eee^{-r/2}\big)
 +\e/2,
  \eea
  where $0<\a<\frac{1}{2}$ and $E_{t,\a}(s)=\frac{s}{t}m(t)- f_{t,\a}(s)$ and $f_{t,\a}=(s\wedge (t-s))^{\a}$. Using the "many-to-one lemma" (see Theorem 8.5 of \cite{Hardy})), the probability in \eqv(map.5) is bounded from above by 
  \be\Eq(map.6)
  \eee^t\P\left(x(t)\in m(t)+D,\forall_{s\in[r_1,t-r_1]}: x(s)\leq \overline D +E_{t,\a}(s) \mbox{ but } \begin{textstyle}\sum_{j}\end{textstyle}m_j \eee^{-\tilde t_j}\1_{\tilde t_j\in[r,t]}> \eee^{-r/2}\right),
  \ee
   where $x$ is a standard Brownian motion and $(\tilde t_j, j\in \N)$ are the points of a size-biased 
   Poisson point process with intensity measure $2dx$ independent of $x$,
    $m_j$ are independent  random variables uniformly distributed on  
    $\{0,\dots,\tilde l_j-1\}$, where finally 
    $\tilde l_j$ are i.i.d. 
   according to the size-biased offspring distribution, $\P(\tilde l_j=k)=\frac{kp_k}{2}$.
    Due to independence, and since  $m_j\leq \tilde 
   l_j$, the expression  \eqv(map.6) is bounded from above by
   \bea \Eq(map.7)
   &&\eee^t\P\left(x(t)\in m(t)+D,\forall_{s\in[r_1,t-r_1]}: x(s)\leq \overline D+E_{t,\a}(s)\right)
  \nonumber\\&&\quad\times
 \P\left(\begin{textstyle}\sum_{j}\end{textstyle}(\tilde l_j-1) \eee^{-\tilde t_j}\1_{\tilde t_j\in[r,t]}>\eee^{-r/2}\right).
   \eea
   The first probability in \eqv(map.7) is bounded by
   \be\Eq(map.8)
   \P\left(x(t)\in m(t)+D,\forall_{s\in[r_1,t-r_1]}: x(s)-\frac{s}{t}x(t)\leq \overline D-
   \underline D-f_{t,\a}(s)\right).
   \ee
   Using that $\xi(s)\equiv x(s)-\frac{s}{t}x(t)$ is a Brownian bridge from $0$ to $0$ in time $t$ that is independent of $x(t)$,  \eqv(map.8) equals
   \bea\Eq(map.9)
   &&\P\left(x(t)\in m(t)+D\right)\P\left(\forall_{s\in[r_1,t-r_1]}: \xi(s)\leq \overline D-\underline
    D-f_{t,\a}(s)\right)\nonumber\\
   &&\leq \P\left(x(t)\in m(t)+D\right)\P\left(\forall_{s\in[r_1,t-r_1]}: \xi(s)\leq \overline D-\underline D\right) .
   \eea
   Using now Lemma 3.4 of \cite{ABK_G} to bound the last factor of \eqv(map.9) we obtain that \eqv(map.9) is bounded from above by
   \be\Eq(map.10)
   \k \frac{r_1}{t-2r_1}\P\left(x(t)\in m(t)+D\right),
   \ee
   where $\k<\infty$ is a positive constant.
  Using this as an upper bound for the first probability in \eqv(map.7) we can bound \eqv(map.7)  from above by
   \be\Eq(map.11)
   \eee^t\k \frac{r_1}{t-2r_1}\P\left(x(t)\in m(t)+D\right)\P\left(\begin{textstyle}\sum_{j}\end{textstyle}(\tilde l_j-1) \eee^{-\tilde t_j}\1_{\tilde t_j\in[r,t]}>\eee^{-r/2}\right).
   \ee
   By (5.25) of \cite{ABK_G}(or an easy Gaussian computation) this is bounded from above by
    \be\Eq(map.11')
   C\k \frac{r_1t}{t-2r_1}\P\left(\begin{textstyle}\sum_{j}\end{textstyle}(\tilde l_j-1) \eee^{-\tilde t_j}\1_{\tilde t_j\in[r,t]}>\eee^{-r/2}\right),
   \ee
   for some positive constant $C<\infty$. Using the  Markov inequality, \eqv(map.11') is bounded from above by
   \be\Eq(map.12)
   C\k \frac{tr_1}{t-2r_1}\eee^{r/2}\E\left(\begin{textstyle}\sum_{j}\end{textstyle}(\tilde l_j-1) \eee^{-\tilde t_j}\1_{\tilde t_j\in[r,t]}\right),
   \ee
We condition on the $\s$-algebra $\mathcal{F}$ generated by the Poisson points.  Using that $\tilde l_j$ is independent of the 
  Poisson point process $(\tilde t_j)_j$ and  $\sum_{j} \eee^{-\tilde t_j}\1_{\tilde t_j\in[r,t]}$ is measurable with respect to $\mathcal F$ we
   obtain that \eqv(map.12)  is equal to
    \bea\Eq(map.13)
    &&C\k \frac{tr_1}{t-2r_1}\eee^{r/2}\E\left(\E\left(\begin{textstyle}\sum_{j}\end{textstyle}(\tilde l_j-1) \eee^{-\tilde t_j}\1_{\tilde t_j\in[r,t]}\vert\mathcal{F}\right)\right)
    \\\nonumber&&=
   C\k \frac{tr_1}{t-2r_1}\eee^{r/2}\E\left(\begin{textstyle}\sum_{j}\end{textstyle} \eee^{-\tilde t_j}\1_{\tilde t_j\in[r,t]}\E\left((\tilde l_j-1)\vert\mathcal{F}\right)\right).
   \eea
   Since $\E(l_j-1)=\sum_{k}\frac{1}{2}(k-1)kp_k=K/2<\infty$ we have that \eqv(map.13) is equal to
    \be\Eq(map.14)
   C\k K/2 \frac{tr_1}{t-2r_1}\eee^{r/2}\E\left(\begin{textstyle}\sum_{j}\end{textstyle} \eee^{-\tilde t_j}\1_{\tilde t_j\in[r,t]}\right).
   \ee
   By Campbell's theorem (see e.g \cite{Kingman93} ), \eqv(map.14) is equal to
   \be\Eq(map.14')
    C\k K/2 \frac{tr_1}{t-2r_1}\eee^{r/2}\int_r^t \eee^{-x}2dx\leq C\k K\frac{tr_1}{t-2r_1}\eee^{-r/2},
   \ee
   which is smaller than $\e/2$, for all $r$ sufficiently large and $t>3r$.
\end{proof}

The second lemma now ensures that $\g$ maps particles, that are extremal, with low probability to a very small neighbourhood of a fixed $a\in\R$.
\begin{lemma}\TH(lem.prop.2)
 Let $a\in\R_+$ and $D\subset \R$ be a compact set.  Define the event
 \be\Eq(map.15.1)
 \mathcal{B}^{\g}_{r,t}(D,a,\d)=\left\{\forall k \mbox{ with } x_k(t)-m(t)\in D \mbox{: }\g(x_k(r))\not\in[a-\d,a]\right\}.
 \ee
 For any $\e>0$ there exists $\d>0$ and  $r(a,D,\d,\e)$ such that, for any $r>r(a,D,\d,\e)$ and $t>3r$
 \be\Eq(map.15)
 \P\left(\left(\mathcal{B}^{\g}_{r,t}(D,a,\d)\right)^c\right)<\e.
 \ee
\end{lemma}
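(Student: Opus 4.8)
The plan is to mimic the proof of Lemma \thv(lem.prop.1): reduce via the localization of extremal paths and the many-to-one lemma to a single Brownian motion with an independent size-biased branching structure, and then estimate the probability that the embedding value $\g(x_k(r))$ falls into the small window $[a-\d,a]$. First I would use Lemma \thv(lem.prop.1) itself: on the event $\mathcal A^{\g}_{r,t}(D)$, which has probability at least $1-\e/3$ for $r$ large and $t>3r$, we have $\g(x_k(t))-\g(x_k(r))\leq \eee^{-r/2}$, so $\g(x_k(r))\in[a-\d,a]$ forces $\g(x_k(t))\in[a-\d,a+\eee^{-r/2}]$. Hence it suffices to control $\P(\exists k: x_k(t)-m(t)\in D,\ \g(x_k(t))\in[a-\d,a+\eee^{-r/2}])$. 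Actually it is cleaner to work directly with $\g(x_k(r))$ and the first-moment bound, so I would instead apply the many-to-one lemma to $\E[\#\{k: x_k(t)-m(t)\in D,\ \g(x_k(r))\in[a-\d,a]\}]$ after restricting to the barrier event $\forall s\in[r_1,t-r_1]: x_k(s)\leq \overline D+E_{t,\a}(s)$, exactly as in \eqv(map.5)--\eqv(map.11').

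After the many-to-one reduction, the quantity to bound is
\be\Eq(plan.1)
\eee^t\,\P\Big(x(t)\in m(t)+D,\ \forall_{s\in[r_1,t-r_1]}: x(s)\leq \overline D+E_{t,\a}(s),\ \textstyle\sum_j m_j\eee^{-\tilde t_j}\1_{\tilde t_j\leq r}\in[a-\d,a]\Big),
\ee
with $x$ a Brownian motion, $(\tilde t_j)$ a size-biased Poisson point process of intensity $2\,dx$, $m_j$ uniform on $\{0,\dots,\tilde l_j-1\}$, and $\tilde l_j$ size-biased offspring variables, all independent. By the independence of the branching data from $x$, \eqv(plan.1) factorizes, and the Brownian factor $\eee^t\P(x(t)\in m(t)+D,\ \text{barrier})$ is bounded by $C\k \frac{r_1 t}{t-2r_1}$ just as in \eqv(map.10)--\eqv(map.11'). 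It therefore remains to show that
\be\Eq(plan.2)
\P\Big(\textstyle\sum_j m_j\eee^{-\tilde t_j}\1_{\tilde t_j\leq r}\in[a-\d,a]\Big)
\ee
can be made smaller than any prescribed multiple of $\e/(C\k r_1)$ by first choosing $\d$ small and then $r$ large (with $t>3r$). This is where the non-atomicity input enters, and it is the main obstacle: one must rule out that the law of the embedding coordinate $\sum_j m_j\eee^{-\tilde t_j}$ puts positive mass on a single point $a$. I would argue this by conditioning on the Poisson points $(\tilde t_j)$ and the $\tilde l_j$: conditionally, $\sum_j m_j\eee^{-\tilde t_j}$ is a finite (for the part with $\tilde t_j\leq r$) sum of independent, genuinely random, bounded variables; since $\tilde t_1$ is a.s. distinct from the other points and $\P(\tilde l_1\geq 2)>0$, with positive conditional probability $m_1$ is non-degenerate, and then the conditional law of the sum has no atoms (a sum of an independent non-degenerate discrete variable with a positive scale factor $\eee^{-\tilde t_1}$ already spreads the mass), so $\P(\sum_j m_j\eee^{-\tilde t_j}\1_{\tilde t_j\leq r}\in[a-\d,a]\,|\,\FF)\to 0$ as $\d\downarrow 0$ for a.e. realization. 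Dominated convergence then gives $\limsup_{\d\downarrow 0}\eqv(plan.2)$ over the part $\tilde t_j\leq r$ is $0$ for each fixed $r$; a small additional argument (e.g. that the tail contribution $\sum_j m_j\eee^{-\tilde t_j}\1_{\tilde t_j>r}$ is $O(\eee^{-r})$ in expectation by Campbell's theorem, hence can only shift the window by a vanishing amount) handles the dependence on $r$ uniformly.

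Assembling the pieces: choose $\d$ small enough that, for all large $r$, \eqv(plan.2) $\leq \e/(4C\k K r_1)$; this makes the first-moment bound for \eqv(plan.1) at most $C\k K\frac{r_1 t}{t-2r_1}\cdot\frac{\e}{4C\k K r_1}\le \e/3$ once $t>3r$; combine with the $\e/3$ from the barrier-localization step (Theorem 2.3 of \cite{ABK_G}) and, if one routes through $\mathcal A^{\g}_{r,t}(D)$, the $\e/3$ from Lemma \thv(lem.prop.1); a union bound then yields $\P((\mathcal B^{\g}_{r,t}(D,a,\d))^c)<\e$ for all $r>r(a,D,\d,\e)$ and $t>3r$. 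The main work is the non-atomicity estimate for \eqv(plan.2) and making it uniform in $r$; everything else is a line-by-line repetition of the chain \eqv(map.5)--\eqv(map.14') already carried out for Lemma \thv(lem.prop.1).
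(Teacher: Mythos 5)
Your overall plan is the right one and matches the paper's up to the point where you must bound
\be\Eq(rev.1)
\P\Big(\textstyle\sum_j m_j\eee^{-\tilde t_j}\1_{\tilde t_j\leq r}\in[a-\d,a]\Big),
\ee
but the argument you give for that bound contains a genuine error. You condition on $\FF=\s\bigl((\tilde t_j)_j,(\tilde l_j)_j\bigr)$ and claim that the conditional law of $\sum_j m_j\eee^{-\tilde t_j}\1_{\tilde t_j\leq r}$ is non-atomic because $m_1$ can be ``non-degenerate'' with a positive scale factor $\eee^{-\tilde t_1}$. This is false: after conditioning on $\FF$, the weights $\eee^{-\tilde t_j}$ are deterministic, and you are looking at a \emph{finite} sum of independent, bounded, \emph{integer-valued} random variables $m_j$ multiplied by constants. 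The conditional law is a discrete measure with finite support, and every point of its support is an atom. Convolving a two-point law with any other discrete law never ``spreads the mass'' in the sense of producing a non-atomic measure; it simply produces a larger finite (or countable) support. So the assertion ``the conditional law of the sum has no atoms'' cannot be repaired as stated, and the dominated-convergence step does not follow from what you wrote. (The conclusion $\P\bigl(\sum_j m_j\eee^{-\tilde t_j}\1_{\tilde t_j\leq r}\in[a-\d,a]\mid\FF\bigr)\to 0$ a.s.\ as $\d\downarrow 0$ is actually \emph{true} for a fixed $a>0$, but for an entirely different reason: for each fixed integer tuple $(c_j)$ the continuous random variables $(\tilde t_j)$ a.s.\ satisfy $\sum_j c_j\eee^{-\tilde t_j}\neq a$, and a countable union over tuples shows $a$ a.s.\ lies outside the random finite support. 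You never invoke the continuity of the $\tilde t_j$, which is where the non-atomicity must come from; conditioning on the $\tilde t_j$ throws that information away.)

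The paper's proof avoids this trap by conditioning in the opposite direction. It decomposes over the first index $i^*=\inf\{i:m_i\neq0\}$, controls the tail $\sum_{j>i^*}m_j\eee^{-(\tilde t_j-\tilde t_{i^*})}$ by Markov and Campbell as you anticipate, and then reduces the anti-concentration question to a bound on
$\P\bigl(\eee^{-\tilde t_{i^*}}\in[b-\d-\eee^{-r/2},b]\bigr)$ uniformly over $b$. Since $\tilde t_{i^*}$ is Erlang$(2,i^*)$, this is an explicit density estimate handled by the mean value theorem (see \eqv(map.27)--\eqv(map.32)), giving a quantitative bound of order $(\d+\eee^{-r/2})\,\eee^{2r^\a}$ that can be made small by first taking $r$ large and then $\d$ small. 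This is both sharper than a soft dominated-convergence argument and free of the conditioning pitfall. If you want to keep your route, you must replace the non-atomicity claim with the correct one --- condition on $(m_j,\tilde l_j)_j$ rather than on $(\tilde t_j,\tilde l_j)_j$, note that on the event that some $m_{j^*}\neq 0$ with $\tilde t_{j^*}\leq r$ the conditional law is absolutely continuous because $\tilde t_{j^*}$ has a density, and handle the $a=0$ boundary case separately --- and then still carry out the tail argument to get the uniformity in $r$ rigorously. As written, the key anti-concentration step does not hold.
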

\begin{proof} 
 Following the proof of Lemma \thv(lem.prop.1) step by step we arrive
 at the bound 
  \be\Eq(map.16)
  \P\left(\left(\mathcal{B}^{\g}_{r,t}(D,a,\d)\right)^c\right)\leq
    C\k \frac{tr_1}{t-2r_1}\P\left(\begin{textstyle}\sum_{j}\end{textstyle} m_j \eee^{-\tilde t_j}\1_{\tilde t_j\in[0,r]}\in[a-\d ,a]\right).
  \ee

 We rewrite the probability in \eqv(map.16) in the form
 \be\Eq(map.20)
 \sum_{i^{*}=1}^\infty \P\left(i^{*}=\inf\{i:m_i\neq 0\},\begin{textstyle}\sum_{j\geq i^{*}}
 \end{textstyle}m_j \eee^{-\tilde t_j}\1_{\tilde t_j\in[0,r]}\in[a-\d ,a]\right).
 \ee
 Consider first $\P\left(i^{*}=\inf\{i:m_i\neq 0\}\right)$. This probability is equal to 
 \be\Eq(map.28)
 \P\left(\forall_{i\leq i^{*}}:m_i=0 \mbox{ and } m_{i^{*}}\neq 0\right)
 =\E\left[\left(1-\frac{1}{l_{i^{*}}}\right)\prod_{j=1}^{i^{*}-1}\frac{1}{l_j}\right].
 \ee
Using that  the $l_j$ are iid together with the simple bound $\E\left(l_j^{-1}\right)\leq 
 \frac{1+p_1}{2}$, we see that  \eqv(map.28) is bounded from above by
 \be\Eq(map.29)
 \left(\frac{1+p_1}{2} \right)^{i^{*}-1}.
 \ee
 Since $\frac{1+p_1}{2}<1$ by assumption on $p_1$ we can choose, for each $\e'>0$  $K(\e')<\infty$ such that
 \be\Eq(map.30)
 \sum_{i^{*}=K(\e')+1}^{\infty}\left(\frac{1+p_1}{2}\right)^{i^{*}-1} <\e' .
\ee
Hence we bound \eqv(map.20) by 
\be\Eq(map.neu.2)
\sum_{i^{*}=1}^{K(\e')}\P\left(i^{*}=\inf\{i:m_i\neq 0\},\begin{textstyle}\sum_{j\geq i^{*}}
 \end{textstyle}m_j \eee^{-\tilde t_j}\1_{\tilde t_j\in[0,r]}\in[a-\d ,a]\right) +\e'.
\ee
 We rewrite
 \be\Eq(map.neu.1)
 \begin{textstyle}\sum_{j\geq i^{*}}
 \end{textstyle}m_j \eee^{-\tilde t_j}\1_{\tilde t_j\in[0,r]}= m_{i^*}\eee^{-\tilde t_{i^{*}}}
 \1_{\tilde t_{i^{*}}\in[0,r]} \left(1+m_{i^*}^{-1}\sum_{j> i^*}m_j \eee^{-(\tilde t_j-\tilde t_{i^{*}})}
 \1_{\tilde t_j-t_i^*\in[0,r-t_{i^*}]} \right)
 \ee
 Next, we estimate the probability that $\tilde t_{i^*}$ is large.
 Observe that $\tilde t_{i^*}=\sum_{i=1}^{i^*}s_i$ where $s_i$ are iid exponentially distributed 
 random variables with parameter $2$.  This implies that  
  $\tilde t_{i^{*}} $ is Erlang$(2,i^{*})$. Thus 
 \be\Eq(map.neu.3)
 \P\left(\tilde t_{i^*}>r^\a\right)=\eee^{-2r^\a}\begin{textstyle}\sum_{i=0}^{i^{*}}
\end{textstyle}
\frac{(2r^{\a})^{i}}{i!}\leq 
\eee  (2r^\a)^{K(\e')}\eee^{-2r^\a},
\mbox{ for all }i^{*}\leq K(\e') .
 \ee
 Next we want to replace $\tilde t_{i^*}$ in the indicator function in  \eqv(map.neu.1)
 by a non-random quantity  $r^\a$, for some $0<\a<1$, in order to have a bound that depends only on 
 the differences $\tilde t_j-\tilde t_{i^*}$. 
 Note first that 
 \bea\Eq(summe.1)
 &&\sum_{j> i^*}m_j \eee^{-(\tilde t_j-\tilde t_{i^{*}})}
 \1_{\tilde t_j-t_i^*\in[0,r-t_{i^*}]} -
 \sum_{j> i^*}m_j \eee^{-(\tilde t_j-\tilde t_{i^{*}})}
 \1_{\tilde t_j-t_i^*\in[0,r-r^\a]}
 \\\nonumber
 &&=\sum_{j> i^*}m_j \eee^{-(\tilde t_j-\tilde t_{i^{*}})}
 \1_{\tilde t_j-t_i^*\in[r-r^\a,r-t_{i^*}]}
 \leq 
 \sum_{j> i^*}m_j \eee^{-(\tilde t_j-\tilde t_{i^{*}})}
 \1_{\tilde t_j-t_i^*\in[r-r^\a,r]}.
 \eea
 Using the fact that  $m_j\leq \tilde l_j-1$, for all $j$
 and the  Markov inequality, we get that 
 \bea \Eq(map.neu.5)
 &&\P\left(\begin{textstyle}\sum_{j> i^*}\end{textstyle}m_j \eee^{-(\tilde t_j-\tilde  t_{i^*})}\1_{\tilde t_j-\tilde t_{i^*}\in[r-r^\a,r ]}>\eee^{-r/2}\right)\nonumber\\
 &&\leq  \eee^{r/2}\E\left(\begin{textstyle}\sum_{j> i^*}\end{textstyle}(\tilde l_j-1) 
 \eee^{-(\tilde t_j-\tilde  t_{i^*})}\1_{\tilde t_j-\tilde t_{i^*}\in[r-r^\a,r ]}\right).
 \eea
 Using Campbell's theorem as in \eqv(map.13), we see  that 
 the second line in \eqv(map.neu.5) is equal to
 \be\Eq(map.nue.5.1)
 \eee^{r/2} K/2 \int_{r-r^\a}^r\eee^{-x}2dx=K\left(\eee^{-r/2+r^\a}-\eee^{-r/2}\right).
 \ee
 For any $\e'>0$, there exists $r_0<\infty$, such that, for all $r>r_0$, the 
  probabilities in \eqv(map.neu.3) and \eqv(map.neu.5)  are smaller than $\e'$.
   On the the event 
   \be\Eq(event.1)
   \mathcal{D}=\{t_{i^*}\leq r^\a \}\cap\left\{\sum_{j> i^*}m_j \eee^{-(\tilde t_j-\tilde  t_{i^*})}
   \1_{\tilde t_j-t_i^*\in[r-r^\a,r ]}\leq \eee^{-r/2}\right\},
   \ee
  which has probability at least $1-2\e'$,  we can bound \eqv(map.neu.2) in a nice way. Namely,
since $m_{i^{*}}\geq 1$ by definition and  $m_j$ are chosen uniformly from 
$(0,\dots,l_{j}-1)$ and independent of  $\{t_j\}_{j\geq 1}$.   Moreover,
 $\sum_{j> i^{*}}m_j \eee^{-(\tilde t_j-\tilde t_{i^{*}})}\1_{\tilde t_j-t_{i^{*}}\in[0,r-r^\a]}\geq 0$ 
  is also independent of $t_{i*}$. It follows that  \eqv(map.neu.2) is bounded from 
   above by 
 \be\Eq(map.23)
\sum_{i^{*}=1}^{K(\e')}
\P\left(i^{*}=\inf\{i:m_i\neq 0\}\right)\max_{b\in[0,1]}\P\left(\{\eee^{-\tilde 
 t_{i^{*}}} \in[b-\d-\eee^{-r/2} ,b] \}\land \{t_{i^*}\leq r^\a\}\right)+3\e'.
 \ee
 Using  the bound on the first probability in \eqv(map.23) given  in \eqv(map.29), 
 one sees that \eqv(map.23) is bounded from above by
 \be \Eq(map.25)
\sum_{i^{*}=1}^{K(\e')}
\left(\frac{1+p_1}{2} \right)^{i^{*}-1}
\max_{b\in[\d+\eee^{-r^\a}+\eee^{-r/2},1]}\P\left( t_{i^{*}} \in\left[-\log b ,-\log\left(b-\d -\eee^{-r/2}\right)\right]   \right) +3\e'
 \ee 
 Recalling  that $t_{i^{*}} $ is Erlang$(2,i^{*})$ distributed,  we have that
 \bea\Eq(map.27)
 && \P\left( t_{i^{*}}  \in\left[-\log b ,-\log\left(b-\d-\eee^{-r/2}\right)\right]   \right)\nonumber\\
  &&= 
     \sum_{i=0}^{i^{*}-1}\frac 1{i!}\left(f_i(b-\d-\eee^{r/2})-f_i(b)\right),
 \eea
 where we have set $f_i(x)= x^2\left(-2\log(x)\right)^i$. 
 By the mean value theorem, uniformly on $b\in [\d+\eee^{-r^\a}+\eee^{-r/2},1]$, 
 \be\Eq(mean-value.1)
0\leq  f_i(b)-f_i(b-\d-\eee^{-r/2})\leq 2(2{r^\a} ))^{i}(i +2r^\a)\left(\d+\eee^{-r/2}\right).
 \ee
 Inserting this bound into \eqv(map.27), we get that, for $i^*\leq K(\e')$, 
  \bea
 \Eq(mean-value.2)
 &&\max_{b\in[\d+\eee^{-r^\a}+\eee^{-r/2},1]}\P\left( t_{i^{*}} \in\left[-\log b ,-\log\left(b-\d -\eee^{-r/2}\right)\right]   \right)  \\\nonumber
 &&\leq 
  4(\d+\eee^{-r/2}) 
\sum_{i=0}^{i^{*}}
\frac{1}{i!} (2r^\a)^{i}
\nonumber\Eq(map.32)
\leq4 \eee  (\d+\eee^{-r/2})  \eee^{2r^\a}.
 \eea
 Now we choose $r$ so big that $4 \eee^{-r/2+2 r^\a+1}\leq \e'/2$ and then $\d$ so small that $\d 4\eee^{2r^\a+1}\leq \e'/2$, so that the entire expression on 
 the right is bounded by $\e'$.
Collecting the bounds in \eqv(map.neu.3), \eqv(map.neu.5) and \eqv(map.32) 
implies \eqv(map.15) if $\e'=\e/4$
 \end{proof}

The following lemma asserts that any two points that get close to the maximum of BBM,
 have distinct images under the map $\g$, unless the time of the most recent common ancestor is
large.  This implies in particular that the positions of the cluster extremes all differ in the second coordinate. This lemma is not strictly needed in the proof
of our main theorem, but we find it nice to make this point explicit. The proof uses largely the same arguments that were used in the proofs of Lemmas \thv(lem.prop.1)
and \thv(lem.prop.2).
 
 \begin{lemma}\TH(lem.prop.3)
 Let $D\subset \R$ be a compact set. For any $\e>0$ there exists $\delta>0$ and $r(\d,\e)$ such that, for any $r>r(\d,\e)$ and $t>3r$ 
 \be\Eq(map.40)
 \P\left(\exists_{ i,j\leq n(t): d(x_i(t),x_j(t))\leq r}: x_i(t),x_j(t)\in m(t)+D,\vert \g(x_i(t))-\g(x_j(t))\vert\leq \d\right)<\e.
 \ee
\end{lemma}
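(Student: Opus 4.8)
The plan is to reduce Lemma~\thv(lem.prop.3) to the same first-moment estimate used in the proofs of Lemmas~\thv(lem.prop.1) and \thv(lem.prop.2), but now applied to a \emph{pair} of extremal particles. First I would observe that if $i,j\le n(t)$ satisfy $d(x_i(t),x_j(t))\le r$ and both $x_i(t),x_j(t)\in m(t)+D$, then by Lemma~\thv(lem.prop.1) (applied with an enlarged compact set and at the cost of an error $\e/3$) we may assume that $\g(x_i(t))-\g(x_i(d_{ij}))\le \eee^{-d_{ij}/2}$ and likewise for $j$, where $d_{ij}=d(x_i(t),x_j(t))$ is the time of the most recent common ancestor. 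Since the two particles share the same ancestor up to time $d_{ij}$, the trajectory $\g(x_i(s))$ and $\g(x_j(s))$ coincide up to time $d_{ij}$, so $\g(x_i(d_{ij}))=\g(x_j(d_{ij}))=:v$. Hence $|\g(x_i(t))-\g(x_j(t))|\le 2\eee^{-d_{ij}/2}$ on the good event; but this alone does not immediately contradict $|\g(x_i(t))-\g(x_j(t))|\le\d$ — rather, it shows that being $\d$-close forces the two particles' embeddings to differ only by what accumulates \emph{after} the splitting time, and the real content is to rule out that the \emph{branching increment} right after time $d_{ij}$ is tiny.

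Next I would make the splitting explicit. Let $\s_i\ne\s_j$ be the first coordinates (in the multi-index labelling) in which the labels of $i$ and $j$ differ; this coordinate corresponds to some branching time $t_\ell$ with $t_\ell\in(d_{ij}, d_{ij}+\text{(first split)})$, and the contribution to $\g$ from that coordinate is $(\s_i-\s_j)\eee^{-t_\ell}$, a nonzero integer multiple of $\eee^{-t_\ell}$, hence at least $\eee^{-t_\ell}$ in absolute value. On the event from Lemma~\thv(lem.prop.1) the remaining contributions after time $d_{ij}$ to each of $\g(x_i(t))$ and $\g(x_j(t))$ are at most $\eee^{-d_{ij}/2}$, so $|\g(x_i(t))-\g(x_j(t))|\ge \eee^{-t_\ell}-2\eee^{-d_{ij}/2}$. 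Therefore, for $|\g(x_i(t))-\g(x_j(t))|\le\d$ to hold with $d_{ij}\le r$, we need either $t_\ell$ large — but the genealogy up to time $r$ is a fixed finite tree, so there are only finitely many candidate branching times $t_\ell\le r$, and $\eee^{-t_\ell}$ is bounded below by a deterministic constant depending on $r$ on an event of probability $\ge 1-\e/3$ (using, e.g., that the first several interbranching times are not too large, as in \eqv(map.neu.3)) — or $d_{ij}$ small, which then makes $\eee^{-t_\ell}$ comparable to a constant and forces $\d$ to be bounded below. Choosing $\d$ small enough (depending on the $r$ fixed from Lemma~\thv(lem.prop.1)) makes the event impossible up to the accumulated error, so that \eqv(map.40) holds.

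Making this quantitative is where I would invoke the many-to-one lemma exactly as in \eqv(map.5)--\eqv(map.14'): bound the probability in \eqv(map.40) by $\eee^{2t}$ times a two-particle Brownian expectation with a barrier constraint on each of the two spine paths below their splitting time and a constraint that the embedded positions of the two spines differ by at most $\d$, which after the localisation step above becomes the constraint that a partial sum $\sum_{j: \tilde t_j\in[d_{ij},r]}(\s_i^{(j)}-\s_j^{(j)})\eee^{-\tilde t_j}$ over the size-biased Poisson branching times lies in an interval of length comparable to $\d+\eee^{-r/2}$ and bounded away from $0$ by $\eee^{-r^\a}$-type quantities — a density estimate of precisely the form \eqv(map.27)--\eqv(map.32). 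The Gaussian/barrier factor contributes the familiar $\k r_1 t/(t-2r_1)$ from \eqv(map.11'), and the branching factor is $O(\d+\eee^{-r/2})\eee^{2r^\a}$ by the mean-value argument in \eqv(mean-value.1)--\eqv(map.32). Choosing $r$ large then $\d$ small finishes the estimate.

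The main obstacle, as I see it, is the bookkeeping for \emph{which} branching time $t_\ell$ realises the first disagreement between the two labels and controlling the event that this $t_\ell$ is large relative to $r$: unlike in Lemma~\thv(lem.prop.2) where one could truncate the spine at a fixed $i^\ast\le K(\e')$, here one must simultaneously handle the two spines' shared ancestry and the split, and argue that the first post-split branching increment is not anomalously small. This is handled by the same Erlang tail bound \eqv(map.neu.3) on the early interbranching times together with the observation that the disagreeing coordinate carries a nonzero integer weight, so its contribution is at least $\eee^{-t_\ell}$; I expect the argument to go through with only notational overhead beyond Lemmas~\thv(lem.prop.1) and \thv(lem.prop.2), which is presumably why the authors remark it is "not strictly needed" and prove it only "to make this point explicit."
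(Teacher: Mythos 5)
Your third paragraph (two-spine many-to-one, barrier constraints, reduction to a Lemma~\thv(lem.prop.2)-type density estimate) is in the right spirit and is close to what the paper does. But the heuristic in your first two paragraphs contains a genuine error, and you are missing the one structural step that makes the bookkeeping tractable. The lower bound you propose,
\[
|\g(x_i(t))-\g(x_j(t))|\ \ge\ \eee^{-t_\ell}-2\eee^{-d_{ij}/2},
\]
is vacuous: the first disagreeing coordinate has branching time $t_\ell=d_{ij}$ (that is exactly how $d(\cdot,\cdot)$ is defined via the $\sup$), so the right-hand side is $\eee^{-d_{ij}}-2\eee^{-d_{ij}/2}$, which is negative for every $d_{ij}\ge 0$. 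More to the point, the post-split contributions to $\g(x_i(t))$ and $\g(x_j(t))$ are \emph{not} subdominant to the split term $\eee^{-d_{ij}}$: each is a sum $\sum m_k\eee^{-t_k}$ over Poisson branching times after $d_{ij}$, with mean of order $K\eee^{-d_{ij}}$, hence comparable to the split. The real issue is therefore pure anti-concentration, i.e.\ ruling out that the two random post-split increments nearly cancel the integer jump, and your ``nonzero integer weight $\Rightarrow$ contribution at least $\eee^{-t_\ell}$'' discussion does not address it. What does address it is the observation that the Lemma~\thv(lem.prop.2) density estimate is uniform in the shift $a$: conditioning on one branch's Poisson sum and on the $\FF_{r_1}$-measurable offset, one applies the density bound to the other, independent, branch. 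This is the content of the paper's steps \eqv(map.42)--\eqv(map.43).

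The structural step you do not propose, and which removes the obstacle you flag as ``the main bookkeeping'', is the invocation of Theorem~2.1 of \cite{ABK_G}: for any $\e'$ there is a \emph{fixed} $r_1$ (depending only on $\e'$, not on $r$) such that with probability at least $1-\e'$ no two particles in $m(t)+D$ satisfy $d(x_i(t),x_j(t))\in(r_1,t-r_1)$. This collapses the event $d_{ij}\le r$ to the event $d_{ij}\le r_1$ (see \eqv(gene.1)--\eqv(gene.2)); $r_1$ is then fixed once and for all. One then conditions on $\FF_{r_1}$, picks the (at most $n(r_1)^2$) pairs of ancestors at time $r_1$, and works with two \emph{independent} offspring BBMs, hence two independent size-biased spine Poisson processes (equations \eqv(seltsam.1)--\eqv(map.70)). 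Without this reduction the split time $d_{ij}$ remains a random variable ranging over $[0,r]$, the two spines are not conditionally independent, and objects like your ``$\sum_{j:\tilde t_j\in[d_{ij},r]}(\s_i^{(j)}-\s_j^{(j)})\eee^{-\tilde t_j}$'' are not well defined as written (there is not one spine Poisson process carrying both labels after the split, but two independent ones). In short: the right quantitative tool is the one you name in your third paragraph, but you need \cite{ABK_G}'s genealogy theorem and the $\FF_{r_1}$-conditioning to decouple the two spines before that tool can be applied cleanly; the split-time heuristic of your second paragraph is not a substitute, and the Erlang tail bound \eqv(map.neu.3) is not the point.
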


\begin{proof}
To control \eqv(map.40), we first use that, by Theorem 2.1 in \cite{ABK_G}, 
for any $\e'$, there is $r_1<\infty$, such that, for all $t\geq 3r_1$, and $r\leq t/3$,
 the event 
\be\Eq(gene.1)
\{ \exists_{ i,j\leq n(t): d(x_i(t),x_j(t))\in (r_1,r)}, x_i(t),x_j(t)\in m(t)+D\}
\ee
has probability smaller than $\e'$. Therefore, 
\bea\Eq(gene.2)
&&\hspace{-4mm}\P\left(\exists_{ i,j\leq n(t): d(x_i(t),x_j(t))\leq r}: x_i(t),x_j(t)\in m(t)+D,\vert \g(x_i(t))-\g(x_j(t))\vert\leq \d\right)\\\nonumber
&&\hspace{-4mm}\leq 
\P\left(\exists_{ i,j\leq n(t): d(x_i(t),x_j(t))\leq r_1}: x_i(t),x_j(t)\in m(t)+D,\vert \g(x_i(t))-\g(x_j(t))\vert\leq \d\right) +\e'.
\eea
The nice feature of the probability in the last line is that $r_1$ is now independent of $r$.

 To bound the probability in the last line, we proceed as follows: at time $r_1$, there are $n(r_1)$ particles alive. From these we select the ancestors of the particles $i$ and $j$. This gives 
 at most $n(r_1)^2$ choices. The offspring of these particle are then independent, conditional on what happened up to time $r_1$, i.e. the $\s$-algebra $\FF_{r_1}$. 
 We denote the offspring of these two particles 
 starting from time $r_1$ by $\tilde x^{(1)}$ and $\tilde x^{(2)}$.  In this way, we write this  probability in the form
 \be\Eq(seltsam.1)
 \E\left[ \sum_{\ell\neq \ell'=1}^{n(r_1)} \P\left(   \dots \big |\FF_{r_1}\right)\right],
 \ee
 where
 \bea\Eq(seltsam.2)
 &&\hspace{-5mm} \P\left(   \dots \big |\FF_{r_1}\right)\nonumber\\\nonumber
 &&\hspace{-5mm} =\P\Bigl(\exists_{ i\leq n^{(1)}(t-r_1),j\leq n^{(2)}(t-r_1)}:   x_\ell(r_1)+ \tilde x^{(1)}_i(t-r_1), x_{\ell'}(r_1)+ x^{(2)}_j(t-r_1)\in m(t)+D, \\ && \quad \vert \g( x_\ell(r_1)+  x^{(1)}_i(t-r_1))-
 \g(x_{\ell'}(r_1)+x^{(2)}_j(t-r_1))
 \vert\leq \d\Big|\FF_{r_1}\Bigr).
 \eea
 The conditional probability is a function of $x_\ell(r_1)$ and $x_{\ell'}(r_1)$ only, and we will bound it uniformly on a set of large probability. 
 Note first that we can chose as finite enlargement,    $\wt D$, of the set $D$  (depending only on the value of $r_1$), such that  such that $D+x_k(r_1)\subset \wt D$  and $D+x_\ell(r_1)\in \wt D $
  with probability at least $1-\e''$. For such $x_{\ell}(r_1), x_{\ell'}(r_2)$, \eqv(seltsam.2) is bounded from above by
\bea\Eq(seltsam.3)\nonumber
&&  \P\Bigl(\exists_{ i\leq n^{(1)}(t-r_1),j\leq n^{(2)}(t-r_1)}:   \tilde x^{(1)}_i(t-r_1), \tilde x^{(2)}_j(t-r_1)\in m(t)+\wt D, \\ && \quad \vert \g( x_\ell(r_1)+  \tilde x^{(1)}_i(t-r_1))-\g(x_{\ell'}(r_1)+
\tilde x^{(2)}_j(t-r_1))
 \vert\leq \d\Big|\FF_{r_1}\Bigr).
 \eea
 Next, we notice that, at the expense of a further error $\e''$, we can introduce the condition that the paths stay below the curves $E_{t-r_1,\a}(s)$, for all $(r_2,t-r_1-r_2)$, for some $r_2$
 depending only on $\e''$. Using the independence of the BBMs $\tilde x^{(1)}$ and $\tilde x^{(2)}$, and proceeding otherwise  as in \eqv(map.7), we can bound \eqv(seltsam.3) from above by 
%
\bea\Eq(map.70)
&&\hspace{-7mm} \e''+ \left(  C\k \frac{(t-r_1)r_2}{t-r_1-2r_2}\right)^2
\\\nonumber&&\hspace{-7mm}\times\;\P\left(\Big\vert \g(x_\ell(r_1))-\g(x_{\ell'}(r_1))+\begin{textstyle}\sum_{k}\end{textstyle}m_k^j\eee^{-\tilde t_k^j}\1_{\tilde t_k^j\in[r_1,t]}
-\begin{textstyle}\sum_{k'}\end{textstyle}m_{k'}^i\eee^{-\tilde t_{k'}^i}\1_{\tilde t_{k'}^i\in[r_1,t]} 
\Big\vert \leq\d \Big|\FF_{r_1}\right),
  \eea
 where $(\tilde t_k^j,k\in\N)$ and $(\tilde t_{k'}^i,k'\in\N)$ are the points of independent Poisson point processes with intensity $2dx$ restricted to $[r_1,t]$. Moreover, $l_k^j,l_{k'}^i$ are i.i.d.
  according to the size-biased offspring distribution and $m_k^j$ resp. $m_{k'}^i$  are uniformly distributed on $\{0,\dots,l_k^j-1\}$ resp. $\{0,\dots,l_{k'}^i-1\}$. We rewrite \eqv(map.70) as
 \be\Eq(map.42)
 \P\left( \begin{textstyle}
 \sum_{k}\end{textstyle}m_k^j\eee^{-\tilde t_k^j} 
 \1_{\tilde t_k^j\in[r_1,t]} \in\g(x_{\ell'}(r_1))-\g(x_\ell(r_1))+
 \begin{textstyle}\sum_{k'}\end{textstyle}m_{k'}^i\eee^{-\tilde t_{k'}^i}\1_{\tilde t_{k'}^j
 \in[r_1,t]} +[-\d,\d]\Big|\FF_{r_1}\right).
 \ee
As in \eqv(map.20) we rewrite the probability in  \eqv(map.42) as
 \bea\Eq(map.43)
 &&\hspace{-5mm} \sum_{l=1}^\infty\P\Bigl( l=\inf\{k:m_k^j\neq 0\},\;%
\\\nonumber
&&\hspace{-5mm}  \begin{textstyle}\sum_{k\geq l}\end{textstyle}
 m_k^j\eee^{-\tilde t_k^j} \1_{\tilde t_k^j\in[r_1,t]}  \in\g(x_{\ell'}(r_1))-\g(x_\ell(r_1))+\begin{textstyle}
 \sum_{k'}
 \end{textstyle}
 m_{k'}^i\eee^{-\tilde t_{k'}^i}\1_{\tilde t_{k'}^j\in[r_1,t]} +[-\d,\d] \Big|\FF_{r_1}\Bigr).
 \eea
Due to the independence of $(\tilde t_k^j,k\in\N)$ and $(\tilde t_{k'}^i,k'\in\N)$ we can proceed as with \eqv(map.20) in the proof of Lemma \thv(lem.prop.2) 
to make \eqv(map.43) as small as desired, independently on the value of $\g(x_{\ell'}(r_1))-\g(x_\ell(r_1))$ by choosing $\d$ small enough.
Collecting all terms, we see that  \eqv(seltsam.1) is bounded by
\be\Eq(seltsam.5)
 \E\left[ \sum_{\ell\neq \ell'=1}^{n(r_1)} \P\left(   \dots \big |\FF_{r_1}\right)\right]\leq 4\e''  \E\left[ \sum_{\ell\neq \ell'=1}^{n(r_1)} 1\right] \leq 4\e'' K\eee^{2r_1}.
 \ee
Choosing $\e''$  and $\e'$ small enough,  this yields the assertion of Lemma  \thv(lem.prop.3).
\end{proof}

%
%
%
%


\section{The $q$-thinning}
The proof of the convergence of $\sum_{i=1}^{n(t)}\d_{(\g(x_i(t)),x_i(t)-m(t))}$ comes in two main steps. In a first step, we show that the points of the local extrema converge to the desired Poisson point process. To make this precise, we work with the concept of thinning classes that was already introduced in \cite{ABK_P}. We repeat the construction here for completeness and introduce the corresponding notation.

Assume here and in the sequel that the particles at time $t$ are labeled in decreasing order
\be\Eq(thin.1)
x_1(t)\geq x_2(t)\geq\dots\geq  x_{n(t)}(t),
\ee
and set $\bar x_k(t)\equiv x_k(t)-m(t)$.
Let 
\be\Eq(thin.2)
\bar Q(t)=\{\bar Q_{i,j}(t)\}_{i,j\leq n(t)}\equiv \{t^{-1}Q_{i,j}(t)\}_{i,j\leq n(t)},
\ee
where 
\be\Eq(thin.3)
Q_{i,j}(t)=\sup\{s\leq t:x_i(s)=x_j(s)\} =d(u^i(t),u^j(t)).
\ee
$(\mathcal{E}(t),\bar Q(t))$ admits the following thinning. For any $q\geq 0$ the 
following is true:
If $ \bar Q_{i,j}(t)\geq q$ and $\bar Q_{j,k}(t)\geq q$,  then $\bar Q_{i,k}(t)\geq q$. Therefore,
the sets $\{i,j\in \{1,\dots, n(t)\}: \bar Q_{i,j}(t)\geq q\}$ form a partition of the set 
$\{1,\dots,n(t)\}$ into equivalence classes. 
We select  the maximal particle of each equivalence class as representative in the 
following recursive manner: 
\bea\Eq(thin.5)
&& i_1=1 \nonumber\\
&& i_k=\min\{j\geq i_{k-1}:\bar Q_{i,j}(t)< q,\;\forall i\leq k-1\},
\eea
if such an $j$ exists. If no such $j$ exists, we denote $k-1=n^*(t)$ and terminate the procedure.
The $q$- thinning process of $(\mathcal{E}(t),\bar Q(t))$, denoted by 
$\mathcal{E}^{(q)}(t)$ is defined by
\be\Eq(thin.6)
\mathcal{E}^{(q)}(t)=\sum_{k=1}^{n^*(t)}\d_{\bar x_{i_k}(t)}.
\ee

\section{Extended convergence of thinned point process}
For $r_d\in\R_+$ and $t>3r_d$ consider the thinned process $\EE^{(r_d/t)}(t)$. Observe that, for $R_t=m(t)-m(t-r_d)-\sqrt 2 r_d=o(1)$,  we have
\be\Eq(prop.pois.1)
\EE^{(r_d/t)}(t)\stackrel{D}{\equiv}\sum_{j=1}^{n(r_d)}
\d_{x_j(r_d)-\sqrt{2} r_d+M_j(t-r_d)-R_t}
\ee
where $M_j(t-r_d)\equiv \max_{k\leq n^{(j)}(t-r_d)}x_k^{(j)}(t-r_d)-m(t-r_d)$ and $x^{(j)}$ are independent BBM's (see (3.15) in \cite{ABK_P}).
Then
\begin{proposition}\TH(prop.pois)
 Let $\mathcal{E}^{(r_d/t)}(t)$ and $n^*(t)$ be defined in \eqv(thin.6) for $q=r_d/t$. Then
 \be\Eq(pois.1)
 \lim_{r_d\uparrow\infty}\lim_{t\uparrow \infty}\sum_{k=1}^{n^*(t)}\d_{(\g(x_{i_k}(t)),\bar x_{i_k}
 (t))}\stackrel{D}{=}\sum_i \d_{(q_i,p_i)}\equiv \wh \EE,
 \ee
 where $(q_i,p_i)_{i\in\N}$ are the points of the Cox process $\wh\EE$  
 with intensity 
 measure $Z(dv)\times C\eee^{-\sqrt 2 x}dx$ with  the random measure $Z(dv)$ defined in 
  \eqv(Z.2).
 Moreover,
 \be\Eq(prop.pois.2)
 \lim_{r\uparrow\infty}
\lim_{r_d\uparrow\infty}\sum_{j=1}^{n(r_d)}\d_{(\gamma(x_j(r)),x_j(r_d)-\sqrt{2} r_d+M_j)}
\stackrel{D}{=}\wh \EE,
 \ee 
 where $M_j$ are i.i.d with law $\omega$ defined in \eqv(extremal.1.1).
\end{proposition}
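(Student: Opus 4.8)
The plan is to prove the two convergence statements \eqv(pois.1) and \eqv(prop.pois.2) by first establishing the second and then transferring it to the first via the representation \eqv(prop.pois.1). I would begin with \eqv(prop.pois.2). Fix $r$ first and let $r_d\uparrow\infty$. For fixed $r$, the quantity $\g(x_j(r))$ depends only on the tree up to time $r$, so there are $n(r)$ distinct ``blocks'' indexed by the particles alive at time $r$, each carrying a fixed spatial label $v_j=\g(x_j(r))$ (distinct a.s., and up to the $\e''$-type errors controlled by Lemma \thv(lem.prop.2) one may assume they avoid any fixed finite set). Inside block $j$, the descendants at time $r_d$ contribute $\sum_{\ell}\d_{(v_j,\, x_\ell(r_d)-\sqrt2 r_d+M_\ell)}$ with the $M_\ell$ i.i.d.\ $\omega$; by the classical Lalley--Sellke / Bramson result recalled in \eqv(extremal.1.1) together with the branching property, the second-coordinate marginal of block $j$ converges, as $r_d\uparrow\infty$, to a Cox process on $\R$ with intensity $C Z^{(j)}\eee^{-\sqrt2 x}dx$, where $Z^{(j)}$ is the limiting derivative martingale of the $j$-th subtree; and these are independent across $j$ given $\FF_r$. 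Summing the spatial-product point processes $\sum_j \d_{v_j}\otimes(\text{Cox}_j)$ and using that a countable superposition of independent Cox processes with a common exponential spatial kernel is again Cox with intensity $\big(\sum_j Z^{(j)}\d_{v_j}\big)(dv)\times C\eee^{-\sqrt2 x}dx$, one obtains the limit as $r_d\uparrow\infty$ to be the Cox process with random measure $Z_r(dv)\equiv\sum_{j\le n(r)}Z^{(j)}\d_{v_j}$ --- which is exactly the $r$-level approximation $Z(\cdot,r,\infty)$ of the truncated derivative martingale from \eqv(Z.1). Then letting $r\uparrow\infty$ and invoking Proposition \thv(lem.Z), $Z_r(dv)$ converges a.s.\ (weakly, as a measure) to the non-atomic limit $Z(dv)$, and the associated Cox processes converge in law to $\wh\EE$. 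This gives \eqv(prop.pois.2).

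For \eqv(pois.1), I would use the identity \eqv(prop.pois.1): for $q=r_d/t$ the thinned process $\EE^{(r_d/t)}(t)$ has the same law as $\sum_{j=1}^{n(r_d)}\d_{x_j(r_d)-\sqrt2 r_d+M_j(t-r_d)-R_t}$, where $M_j(t-r_d)$ are i.i.d.\ maxima of independent BBM's of length $t-r_d$ (recentered), and $R_t=o(1)$. The representative particles $i_k$ selected by \eqv(thin.5) are, block by block, the maximal particle within each time-$r_d$ subtree, so $x_{i_k}(t)$ corresponds to the block maximum $x_j(r_d)-\sqrt2 r_d+M_j(t-r_d)$; the key point, supplied by Lemma \thv(lem.prop.1), is that for these (extremal) particles $\g(x_{i_k}(t))$ differs from $\g(x_{i_k}(r))$ by at most $\eee^{-r/2}$ with probability $\to1$ as $r\to\infty$, uniformly in $t>3r$, while $\g(x_{i_k}(r))=\g(x_j(r))$ once $r\le r_d$. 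Hence, after first taking $t\uparrow\infty$ (so $M_j(t-r_d)\to M_j$ in law, jointly, by \eqv(extremal.1.1) applied to each subtree) and then $r_d\uparrow\infty$, the left side of \eqv(pois.1) has the same limit as $\lim_{r\uparrow\infty}\lim_{r_d\uparrow\infty}\sum_{j=1}^{n(r_d)}\d_{(\g(x_j(r)),\,x_j(r_d)-\sqrt2 r_d+M_j)}$, which is precisely \eqv(prop.pois.2); the outer $\lim_{r\uparrow\infty}$ is harmless because the spatial labels are eventually $r$-stable by Lemma \thv(lem.prop.1) and the limiting measure $Z(dv)$ exists by Proposition \thv(lem.Z). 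To make ``same limit'' rigorous I would test against a finite collection of functions $f\in C_c(\R_+\times\R)$ and show $|\<\wt\EE^{(r_d/t)}_t,f\>-\<(\text{process with }\g(\cdot(r))),f\>|\to0$ using the modulus of continuity of $f$, the displacement bound $\eee^{-r/2}$ on $\mathcal A^\g_{r,t}(D)$, and the fact that only finitely many atoms lie in $\supp f$ with high probability (tightness of the number of extremal particles in a compact window, from \cite{ABK_G}).

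The main obstacle I expect is the exchange of the three limits $t\uparrow\infty$, $r_d\uparrow\infty$, $r\uparrow\infty$ together with the superposition-of-Cox-processes step: one must show that the random intensity measures $Z_r(dv)=Z(\cdot,r,\infty)$ assemble correctly and that no mass escapes to $v=\infty$ or concentrates on small $v$-neighbourhoods --- this is exactly where Lemmas \thv(lem.prop.1) and \thv(lem.prop.2) are needed (the former to localise $\g$ at scale $r$ for extremal particles, the latter to rule out atoms of the limit). The interchange $\lim_{r_d}\lim_t$ versus $\lim_t\lim_{r_d}$ in \eqv(pois.1) requires a uniform (in $t$) tail estimate on the contribution of particles with $\bar Q_{i,j}(t)$ near $r_d/t$, which again follows from the genealogy estimates of \cite{ABK_G} already used in Section 4; granting those, the argument is a bookkeeping of Laplace functionals. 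I would organise the write-up so that the Laplace-functional computation for a single block (yielding the factor $\E[\exp(-C Z^{(j)}\int(1-\eee^{-f})\eee^{-\sqrt2 x}dx)]$) is done once, and the superposition and the passage $r\to\infty$ via Proposition \thv(lem.Z) are stated as the two remaining ingredients.
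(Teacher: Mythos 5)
Your proposal is correct and follows essentially the same route as the paper's proof: fix an intermediate time $r$, decompose into the $n(r)$ subtrees (the paper does this by conditioning on $\FF_{r_d}$ inside a Laplace functional), apply the ABK\_P computation per block, superpose the resulting independent Cox processes, and pass $r\uparrow\infty$ via Proposition \thv(lem.Z), with Lemmas \thv(lem.prop.1) and \thv(lem.prop.2) supplying the $r$-stabilization and localization of the labels $\g$; the only difference is organizational, namely that you derive \eqv(prop.pois.2) first and transfer to \eqv(pois.1) via \eqv(prop.pois.1), whereas the paper does the Laplace-functional computation for \eqv(pois.1) directly and obtains \eqv(prop.pois.2) en route. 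One slip to fix: block $j$ is launched at height $x_j(r)-\sqrt2 r$, so the block-$j$ Cox intensity on the second coordinate is $C Z^{(j)}\eee^{\sqrt2 (x_j(r)-\sqrt2 r)}\eee^{-\sqrt2 x}\,dx$, and consequently $Z_r(dv)=\sum_{j\le n(r)}\eee^{\sqrt2(x_j(r)-\sqrt2 r)}Z^{(j)}\d_{v_j}(dv)$ --- with this exponential weight it does equal $Z(\cdot,r,\infty)$ from \eqv(sellke.2), but without it (as you wrote it) the identification fails.
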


The proof of Proposition \thv(prop.pois) relies on Proposition \thv(lem.Z) which we now prove.

\begin{proof}[Proof of Proposition \thv(lem.Z)]
 For $v,r\in\R_{+}$ fixed, the process  $Z(v,r,t)$ defined in \eqv(Z.1) is a martingale in 
 $t>r$ (since $Z(\infty,r,t) $ is the derivative martingale and $\1_{\g(x_i(r))\leq v}$ does 
 not depend on $t$). 
 To see that $Z(v,r,t)$ converges a.s. as $t\uparrow\infty$, 
 note that 
 \bea\Eq(sellke.1)
 Z(v,r,t)&=&\sum_{i=1}^{n(r)} \1_{\g(x_i(r))\leq v}\eee^{\sqrt 2(x_{i}(r)-\sqrt 2r)}
 \Biggl( \left(\sqrt 2 r-x_{i}(r)\right)\sum_{j=1}^{n^{(i)}(t-r)}
 \eee^{\sqrt 2(x^{(i)}_j(t-r)-\sqrt 2(t-r))}\nonumber\\
&&\quad+ \sum_{j=1}^{n^{(i)}(t-r)} \left(\sqrt 2(t-r)-x_j^{(i)}(t-r)\right)
 \eee^{\sqrt 2(x^{(i)}_j(t-r)-\sqrt 2(t-r))}\Biggr)\nonumber\\
 &=&
 \sum_{i=1}^{n(r)} \1_{\g(x_i(r))\leq v}\eee^{\sqrt 2(x_{i}(r)-\sqrt 2r)}
 \left(\sqrt 2 r-x_{i}(r)\right) Y^{(i)}_{t-r}\nonumber\\
 &&\quad+ \sum_{i=1}^{n(r)} \1_{\g(x_i(r))\leq v}
 \eee^{\sqrt 2(x_{i}(r)-\sqrt 2r)} Z^{(i)}_{t-r}.
 \eea
 Here $Z^{(i)}_t, i\in \N$, are iid copies of the derivative martingale, and $Y^{(i)}_t, i\in\N$, are
 iid copies of the McKean martingale,
 \be\Eq(macky.1)
 Y_t\equiv \sum_{i=1}^{n(t)}  \eee^{\sqrt 2(x^{(i)}_j(t)-\sqrt 2t)}.
 \ee
  Lalley and Sellke proved in \cite{LS} that
 $\lim_{t\uparrow \infty} Y_t=0$, a.s. while $\lim_{t\uparrow\infty} Z_t=Z$  exists a.s. and  is a non-trivial 
 random variable. This implies that
 \be\Eq(sellke.2)
 \lim_{t\uparrow\infty}Z(v,r,t)\equiv Z(v,r)=\sum_{i=1}^{n(r)} 
 \eee^{\sqrt 2(x_{i}(r)-\sqrt 2r)} Z^{(i)}\1_{\g(x_i(r))\leq v},
 \ee
 where $Z^{(i)}$, $i\in \N$ are iid copies of $Z$.
To show that $Z(v,r)$ converges, as $r\uparrow\infty$, we go back to \eqv(Z.1). 
Note that, for fixed $v$,  $\1_{\g(x_i(r))\leq v}$ is 
 monotone decreasing in $r$. On the other hand, Lalley and Sellke have shown that
 $\min_{i\leq n(t)}\left(\sqrt 2 t-x_i(t)\right) \to +\infty$, almost surely, as $t\uparrow \infty$.
 Therefore, the part of the sum in \eqv(Z.1) that involves negative terms (namely
 those for which $x_i(t)>\sqrt 2 t$) converges to zero, almost surely. The remaining part
 of the sum is decreasing in $r$, and this implies that the limit, as $t\uparrow\infty$,
 is monotone decreasing almost surely. 
Moreover,  $0\leq Z(v,r)\leq Z$, a.s.,  where $Z$ is the almost sure limit of the derivative martingale. Thus  $\lim_{r\uparrow \infty}Z(v,r)\equiv Z(v)$ exists. 
Finally,  $0\leq Z(v)\leq Z $ and $Z(v)$ is an increasing function of $v$ because $Z(v,r)$
 is increasing in $v$, a.s., for  each $r$.

 To show that $Z(du)$ is nonatomic, fix $\e,\d>0$ and let $D\subset \R$ be compact.
By Lemma \thv(lem.prop.3) there exists  $r_1(\e,\d)$ such that, 
for all $r>r_1(\e,\d)$ and $t>3r$, 
 \be\Eq(Z.11)
 \P\left(\exists_{i,j\leq n(t)}: d(x_i(t),x_j(t))\leq r, x_i(t),x_j(t)\in m(t)+D, \vert \g(x_i(t))-\g(x_j(t))\vert\leq \d\right)<\e.
 \ee
 Rewriting \eqv(Z.11) in terms of the thinned process $\mathcal{E}^{(r/t)}(t)$  gives
 \be\Eq(Z.13)
 \P\left(\exists_{i_k,i_{k'}}: \bar x_{i_k}(t),\bar x_{i_{k'}}(t)\in m(t)+D,|\g(\bar x_{i_k}(t))-
 \g(\bar x_{i_{k'}}(t))|\leq \d\right)\leq \e.
 \ee
 Assuming, for the moment, that $\mathcal{E}^{(r/t)}(t)$ converges as claimed in 
 Proposition \thv(prop.pois), this implies that, for any $\e>0$, for small enough $\d>0$, 
 \be\Eq(Z.14)
 \P\left(\exists\d>0: \exists i\neq j: |q_i-q_j|<\d \right)<\e.
 \ee
 This could not be true if $Z(du)$ had an atom. This proves Proposition \thv(lem.Z) 
provided we can show convergence of $\mathcal{E}^{(r/t)}(t)$. 
 \end{proof}
 
The proof of Proposition \thv(prop.pois) uses the properties of the map $\g$ obtained in Lemma \thv(lem.prop.1) and \thv(lem.prop.2). In particular, we use that, in the limit as $t\uparrow \infty$, the image of the extremal particles under $\g$ converges and that essentially no particle is mapped too close to the boundary of any given compact set. Having these properties at hand we can use the same  procedure as in the proof of Proposition 5 in \cite{ABK_P}. Finally, we use Proposition \thv(lem.Z) to deduce Proposition \thv(prop.pois).

\begin{proof}[Proof of Proposition \thv(prop.pois)]
 We show the convergence of the Laplace functionals. Let $\phi:\R_+\times\R\to \R_+$ be a measurable function with compact support. For simplicity we start by looking at simple functions of the form
 \be\Eq(pois.2)
 \phi(x,y)=\sum_{i=1}^N a_i\1_{A_i\times B_i}(x,y),
 \ee
 where  $A_i=[\underline{A}_i,\overline{A}_i]$ and $B_i=[\underline{B}_i,\overline{B}_i]$,
 for  $N\in \N$, $i=1,\dots,N$, $a_i,\underline{A}_i,\overline{A}_i\in\R_+$, and 
 $\underline{B}_i, \overline{B}_i\in\R$. The extension to general functions $\phi$ then 
 follows by monotone convergence.
For such $\phi$,  we consider the Laplace functional
 \be\Eq(pois.5)
\Psi_t(\phi)\equiv  \E\left[\exp\left(-\sum_{k=1}^{n^*(t)}\phi\left(\g(x_{i_k}(t)),\bar x_{i_k}(t)\right)
\right)\right]. 
 \ee
 The idea is that the function $\g$ only depends on the early branchings of the particle. 
 To this end we insert the identity
 \be\Eq(pois.6)
 1=\1_{\mathcal{A}^{\g}_{r,t}(\supp_y \phi)}+\1_{\left(\mathcal{A}^{\g}_{r,t}(\supp_y \phi)\right)^c}
 \ee
 into \eqv(pois.5), where  $\AA^\g_{r,t}$ is defined  in  \eqv(map.4.1),
 and by $\supp_y \phi$ we mean the support of $\phi$ with respect to 
 the second variable. 
 By Lemma \thv(lem.prop.1) we have that, for all $\e>0$, there exists $r_\e$ such that, 
 for all $r>r_\e$, 
 \be\Eq(pois.7)
 \P\left(\left(\mathcal{A}^{\g}_{r,t}(\supp_y \phi)\right)^c\right)<\e,
 \ee
 uniformly in $t>3r$. Hence it suffices to show the convergence of
 \be\Eq(pois.8)
 \E\left[\exp\left(-\sum_{k=1}^{n^*(t)}\phi\left(\g(x_{i_k}(t)),\bar x_{i_k}(t)\right)\right)\1_{\mathcal{A}^{\g}_{r,t}(\supp_y \phi)}\right].
 \ee
  We introduce yet  another identity into \eqv(pois.8), namely 
  \be \Eq(pois.9)
  1= \1_{\bigcap_{i=1}^N\left(\mathcal{B}_{r,t}^{\g}(\supp_y \phi, \underline{A}_i)\cap \mathcal{B}_{r,t}^{\g}(\supp_y \phi, \overline{A}_i )\right) } 
   +\1_{\left(\bigcap_{i=1}^N\left(\mathcal{B}_{r,t}^{\g}(\supp_y \phi, \underline{A}_i)\cap \mathcal{B}_{r,t}^{\g}(\supp_y \phi, \overline{A}_i)\right)\right)^c},
  \ee
  where we use the shorthand notation
   $ \mathcal{B}_{r,t}^{\g}(\supp_y \phi, \overline{A}_i)\equiv \mathcal{B}_{r,t}^{\g}(\supp_y 
   \phi, \overline{A}_i,\eee^{-r/2})$ 
  (recall \eqv(map.15.1)). By Lemma \thv(lem.prop.2), for all 
  $\e>0$ there exists  $\bar{r}_{\e}$ such that, for all $r>\bar r_\e$ and uniformly in $t>3r$,
  \be\Eq(pois.10)
  \P\left(\left(\cap_{i=1}^N\left(\mathcal{B}_{r,t}^{\g}(\supp_y \phi, \underline{A}_i)\cap \mathcal{B}_{r,t}^{\g}(\supp_y \phi, \overline{A}_i)\right)\right)^c\right)<\e.
  \ee
 Hence we only have to show the convergence of 
 \be \Eq(pois.11)
  \E\left[\exp\left(-\sum_{k=1}^{n^*(t)}\phi\left(\g(x_{i_k}(t)),\bar x_{i_k}(t)\right)\right) 
  \1_{\mathcal{A}_{r,t}^{\g}(\supp_y \phi)\cap\left( \bigcap_{i=1}^N
  \left(\mathcal{B}_{r,t}^{\g}(\supp_y \phi, \underline{A}_i)\cap \mathcal{B}_{r,t}^{\g}
  (\supp_y \phi, \overline{A}_i)\right) \right)}\right].
 \ee
 Observe that on the event in the indicator function in the the last line
  the following holds: If, for any $i\in\{1,\dots,N\}$, $\g(x_{k}(t))\in[\underline{A}_i,\overline{A}_i]$ and $\bar x_k(t)\in\supp_y \phi$
 then also $\g(x_k(r))\in [\underline{A}_i,\overline{A}_i]$, and vice versa. 
 Hence \eqv(pois.11) is equal to
 \be \Eq(pois.12)
 \E\Bigg[\exp\left(-\sum_{k=1}^{n^*(t)}\phi\left(\g(x_{i_k}(r)),\bar x_{i_k}(t)\right)\right) 
 \1_{\mathcal{A}_{r,t}^{\g}(\supp_y \phi)\cap\left( \bigcap_{i=1}^N\left(\mathcal{B}_{r,t}^{\g}(\supp_y \phi, \underline{A}_i)\cap \mathcal{B}_{r,t}^{\g}(\supp_y \phi, \overline{A}_i)\right) \right)}\Bigg].
 \ee
 Now we apply again Lemma \thv(lem.prop.1) and Lemma \thv(lem.prop.2) to see that the quantity in \eqv(pois.12) is 
  equal to
 \be\Eq(pois.13)
 \E\left[\exp\left(-\sum_{k=1}^{n^*(t)}\phi\left(\g(x_{i_k}(r)),\bar x_{i_k}(t)\right)\right)\right]
 +O(\e).
 \ee
Introducing a conditional expectation given $\FF_{r_d}$,  we get (analogous to (3.16) in 
\cite{ABK_P}) as $t\uparrow \infty$ that \eqv(pois.13) is equal to
\bea\Eq(pois.13.2)
&&\lim_{t\uparrow\infty}\E\left[\exp\left(-\sum_{k=1}^{n^*(t)}\phi\left(\g(x_{i_k}(r)),\bar x_{i_k}(t)\right)\right)\right]\\\nonumber
&&=\lim_{t\uparrow\infty}\E\left[\prod_{j=1}^{n(r_d)}
\E\left[\eee^{-\phi(\g(x_j(r)),x_j(r_d)-m(t)+m(t- r_d)+
\max_{i\leq n^{(j)}(t-r_d)} x^{(j)}_i(t-r_d)-
m(t-r_d) )}  \big|\FF_{r_d}\right]\right]
\\\nonumber
&&=\E\left[\prod_{j=1}^{n(r_d)}\E\left[\eee^{-\phi(\g(x_j(r)),x_j(r_d)-\sqrt 2 r_d+M)}
\big|\FF_{r_d}\right]\right],
\eea
where $M$ is the limit of the centred maximum of BBM, whose distribution is 
given in  \eqv(extremal.1.1). Note that $M$ is independent of $\FF_{r_d}$. 
The last expression is completely analogous to Eq. (3.17) in \cite{ABK_P}. Following the 
analysis of this expression up to Eq. (3.25) in  \cite{ABK_P}, 
we find that \eqv(pois.13.2) is equal to 
 \be\Eq(pois.14)
c_{r_d} \E\Bigl[\exp\Bigl(-C\sum_{j\leq n(r_d)}y_j(r_d)\eee^{-\sqrt{2}y_j(r_d)}\sum_{i=1}^N (1-\eee^{a_i})\1_{A_i}(\g(x_j(r)))\bigl(\eee^{-\sqrt{2}\; \underline{B}_i}-\eee^{-\sqrt{2}\;\overline{B}_i}\bigr)\Bigr)\Bigr],
 \ee
 where $y_j(r_d)=x_j(r_d)-\sqrt 2 r_d$, $\lim_{r_d\uparrow \infty}c_{r_d}=1$, and 
 $C$ is the constant from \eqv(extremal.1.1).
 Using Proposition \thv(lem.Z) \eqv(pois.14) is in the limit as $r_d\uparrow \infty$ and $r\uparrow \infty$ equal to
 \bea\Eq(pois.16)
&& \E\left[\exp\left(- C\sum_{i=1}^N (1-\eee^{a_i})\bigl(\eee^{-\sqrt 2 \underline{B}_i}-\eee^{-\sqrt 2\;
\overline{B}_i}\bigr)\right)(Z(\overline{A_i})-Z(\underline {A_i}))\right]\\\nonumber
&&= \E\left[\exp\left(\int \left(\eee^{-\phi(x,y)}-1\right) Z(dx)\sqrt 2C\eee^{-\sqrt 2y}dy\right)
 \right].
 \eea
 This is the Laplace functional of the process $\wh \EE$, 
 which proves Proposition \thv(prop.pois).
\end{proof}
To prove Theorem \thv(thm.extend) we need to combine 
  Proposition \thv(prop.pois) with the
 results on the genealogical structure of the extremal particles of BBM obtained in 
 \cite{ABK_G} and the convergence  of the decoration point process $\D$ (see e.g. Theorem 2.3 of \cite{ABBS}).
 
\begin{proof}[Proof of Theorem \thv(thm.extend)]
For $x_{i_k}(t) \in \supp\left(\mathcal{E}^{(r_d/t)}(t)\right)$ define the process of recent
relatives by 
\be\Eq(extend.3)
\D_{t,r}^{(i_k)}=\d_0+\sum_{j:\t_j^{i_k}>t-r}\mathcal{N}_j^{i_k},
\ee
where $\t_j^{i_k}$ are the branching times along the path $s\mapsto x_{i_k}(s)$ 
enumerated backwards in time and $\mathcal{N}_j^{i_k}$ the point measures of particles 
whose ancestor was born at $\t_j^{i_k}$. In the same way let $\D_r^{(i_k)}$ be 
independent copies of $\D_r$ which is defined as (recall \eqv(extremal.3))
\be\Eq(deltar.1)
\D_r\equiv \lim_{t\uparrow\infty}\sum_{i=1}^{n(t)} \1_{d\left(\tilde x_i(t),
\tilde x_{\arg\max_{j\leq n(t)}\tilde x_j(t)}(t)\right)\geq t-r}\,
 \d_{\tilde x_i(t)-\max_{j\leq n(t)}\tilde x_j(t)}
\ee
conditioned on $\max_{j\leq n(t)}\tilde x_j(t)\geq \sqrt{2}t$, the point measure obtained from $\D$ 
by only keeping particles that branched of the maximum after  time $t-r$ (see the 
backward description of $\D$ in \cite{ABBS}). By Theorem 2.3 of \cite{ABBS} we 
have that (the labelling $i_k$ refers to the thinned process $\EE^{(r_d/t)}(t)$)
\be\Eq(extend.4)
\left(x_{i_k}(r_d)-\sqrt{2} r_d+M_{i_k}(t-r_d),\D_{t,r_d}^{(i_k)}\right)_{1\leq k\leq n^*(t)} \Rightarrow 
\left(x_j(r_d)-\sqrt{2} r_d+M_{j},\D_{r_d}^{(j)}\right)_{j\leq {n(r_d)}},
\ee
as $t\uparrow \infty$,
where $M_j $ are independent copies of $M$ with law $\omega$ (see \eqv(extremal.1.1)). Moreover, $\D_{r_d}^{(j)}$ is independent of $(M_j)_{j\leq n(r_d)}$. 
 Looking now at the the Laplace functional for the complete point process $\wt \EE_t$,
 \be\Eq(all.1)
 \wt\Psi_t(\phi)\equiv \E \left[\eee^{-\int \phi(x,y)\wt\EE_t(dx,dy)}\right],
 \ee for $\phi$ as in \eqv(pois.2),  and doing the same manipulations as in the proof of 
 Proposition \thv(prop.pois), shows that 
 \be\Eq(extend.5.1)
\wt \Psi_t(\phi)=\E\left[\exp\left(-\sum_{k=1}^{n(t)} \phi\left(\g(x_{k}(r)),\bar x_{k}(t)\right)
\right) \right]
 +O(\e).
 \ee
 Denote by $\mathcal{C}_{t,r}(D)$ the event
 \be\Eq(extend.2)
 \mathcal{C}_{t,r}(D)=\{\forall i,j\leq n(t) \mbox{ with }x_i(t),x_j(t)\in D+m(t) \mbox{: }d(x_i(t),x_j(t))\not \in (r,t-r)\}.
 \ee
 By Theorem 2.1 in \cite{ABK_G} we know that, for each $D\subset \R$ compact, 
 \be\Eq(extend.1)
 \lim_{r\uparrow\infty}\sup_{t>3r}\P\left(\left(\mathcal{C}_{t,r}(D)\right)^c\right)=0.
 \ee
 Hence by introducing $1=\1_{\left(\mathcal{C}_{t,r}(\supp_y \phi)\right)^c}+\1_{\mathcal{C}_{t,r}(\supp_y \phi) }$ into \eqv(extend.5.1), we obtain that 
 \be\Eq(extend.5) 
\wt\Psi_t(\phi)=\E\left[\eee^{-\sum_{k=1}^{n^*(t)}\left(\phi\left(\g(x_{i_k}(r)),\bar x_{i_k}(t)\right)+\sum_j \phi\left(\g(x_{i_k}(r)),\bar x_{i_k}(t)+
\left(\D_{t,r_d}^{(i_k)}\right)_j\right)\right)}\right]
+O(\e) ,
\ee
 where $\left(\D_{t,r_d}^{(i_k)}\right)_j$ are the atoms of $\D_{t,r_d}^{(i_k)}$. Hence it suffices to show that
 \be \Eq(extend.6)
 \sum_{k=1}^{n^*(t)} \sum_{\ell}\d_{(\g(x_{i_k}(r)),\bar x_{i_k}(t))+(0,\left(\D_{t,r_d}^{(i_k)}\right)_\ell)  }
 \ee
 converges weakly when first taking the limit $t\uparrow \infty$ and then the limit $r_d\uparrow \infty$ and finally $r\uparrow \infty$. 
But by \eqv(extend.4),
 \be \Eq(extend.7)
 \lim_{t\uparrow \infty}\sum_{k=1}^{n^*(t)} \sum_{\ell}\d_{(\g(x_{i_k}(r)),\bar x_{i_k}(t))+\left(0,\left(\D_{t,r_d}^{(i_k)}\right)_\ell\right)  }=
 \sum_{j=1}^{n(r_d)} \sum_{\ell}\d_{(\g(x_{j}(r)), x_{j}(r_d)-\sqrt 2 r_d+M_j)+\left(0,\left(\D_{r_d}^{(j)}\right)_\ell\right)}.  
 \ee
  The limit as first $r_d$ and then $r$ tend to infinity of the process on the right-hand side 
  exists and is equal to $\wt\EE$ by Proposition  \thv(prop.pois) (in particular \eqv(prop.pois.2)). This 
  concludes the proof of Theorem \thv(thm.extend).  
\end{proof}



\end{document}